\colorlet{refkey}{pink!90!red}
\colorlet{labelkey}{JungleGreen!80!yellow}
\newtheorem{theorem}{Theorem}[section]
\crefname{theorem}{Theorem}{Theorems}
\newtheorem{proposition}{Proposition}[section]
\crefname{proposition}{Proposition}{Propositions}
\newtheorem{lemma}{Lemma}[section]
\crefname{lemma}{Lemma}{Lemmas}
\newtheorem{corollary}{Corollary}[section]
\crefname{corollary}{Corollary}{Corollaries}
\theoremstyle{definition}
\newtheorem{remark}{Remark}[section]
\crefname{remark}{Remark}{Remarks}
\crefname{definition}{Definition}{Definitions}
\crefname{section}{Section}{Sections}
\numberwithin{equation}{section}
\numberwithin{equation}{section}
\setlist[enumerate,1]{label=(\arabic*)}
\DeclarePairedDelimiterXPP{\dual}[2]{}{\langle}{\rangle}{}{#1,#2}
\DeclarePairedDelimiterXPP{\Lpnorm}[4]{}{\lVert}{\rVert}{_{L^{#2}(#3; #4)}}{#1}
\DeclarePairedDelimiterXPP{\norm}[2]{}{\lVert}{\rVert}{_{#2}}{#1}
\DeclarePairedDelimiter{\abs}{\lvert}{\rvert}
\DeclarePairedDelimiter{\paren}{\lparen}{\rparen}
\DeclarePairedDelimiter{\bracket}{\lbrack}{\rbrack}
\newcommand{\jump}[1]{[\![#1]\!]}
\providecommand\given{}
\newcommand\SetSymbol[1][]{%
    \nonscript\:#1\vert
    \allowbreak
    \nonscript\:
    \mathopen{}
}
\DeclarePairedDelimiterX{\Set}[1]{\{}{\}}{%
    \renewcommand\given{\SetSymbol[\delimsize]}
    #1
}
\newcommand{\bR}{\mathbb{R}}
\newcommand{\bN}{\mathbb{N}}
\newcommand{\bC}{\mathbb{C}}
\newcommand{\cP}{\mathcal{P}}
\newcommand{\cW}{\mathcal{W}}
\newcommand{\cL}{\mathcal{L}}
\newcommand{\cJ}{\mathcal{J}}
\newcommand{\cT}{\mathcal{T}}
\newcommand{\cJt}{\cJ_\tau}
\newcommand{\brokenSobolev}[1][X]{\cW^{1,1}(\cJt; #1)}
\newcommand{\tiln}{{\tilde{n}}}
\newcommand{\tilt}{{\tilde{t}}}
\newcommand{\regdelta}{\delta^{\tiln,\tilt}}
\newcommand{\reggamma}{\gamma^{\tiln,\tilt}}
\newcommand{\regg}{g^{\tiln,\tilt}}
\newcommand{\reggammatau}{\gamma_\tau^{\tiln,\tilt}}
\newcommand{\reggtau}{g_\tau^{\tiln,\tilt}}
\newcommand{\err}{e^{\tiln,\tilt}}
\newcommand{\interr}{z^{\tiln,\tilt}}
\newcommand{\res}{z_\tau^{\tiln,\tilt}}
\newcommand{\weight}[1][\tilt]{\omega_{#1}}
\newcommand{\initialspace}{{(X,D(A))_{1-\frac{1}{p},p}}}
\title{Discrete maximal regularity for the discontinuous Galerkin time-stepping method without logarithmic factor}
\author{    
    Takahito Kashiwabara%
    \thanks{%
        Graduate School of Mathematical Sciences, the University of Tokyo.
    } {}
    and Tomoya Kemmochi%
    \thanks{%
        Corresponding author. 
        Graduate School of Engineering, Nagoya University. 
        \\
        Email: \texttt{kemmochi@na.nuap.nagoya-u.ac.jp}, Web: \url{https://t-kemmochi.github.io/en/}
    }
}
\begin{document}
\mleftright

\maketitle

\begin{abstract}
    Maximal regularity is a kind of a priori estimates for parabolic-type equations and it plays an important role in the theory of nonlinear differential equations.
    The aim of this paper is to investigate the temporally discrete counterpart of maximal regularity for the discontinuous Galerkin (DG) time-stepping method.
    We will establish such an estimate without logarithmic factor over a quasi-uniform temporal mesh.
    To show the main result, we introduce the temporally regularized Green's function and then reduce the discrete maximal regularity to a weighted error estimate for its DG approximation.
    Our results would be useful for investigation of DG approximation of nonlinear parabolic problems. 
\end{abstract}

\section{Introduction and results}

Let $X$ be a reflexive Banach space and let $A$ be a densely defined closed linear operator on $X$. 
Then, we consider the abstract Cauchy problem 
\begin{equation}
    \begin{cases}
        \partial_t u(t) + A u(t) = f(t), & t \in (0,T) \eqqcolon J, \\
        u(0) = u_0, 
    \end{cases}
    \label{eq:cauchy}
\end{equation}
where $f \in L^p(J;X)$ and $u_0 \in \initialspace$ are given and $p \in (1,\infty)$. 
Here $D(A)$ denotes the domain of $A$ equipped with the graph norm. 
We assume that $A$ has maximal regularity, which means that there exists a unique mild solution of \eqref{eq:cauchy} satisfying the a priori estimate
\begin{equation}
    \Lpnorm{\partial_t u}{p}{J}{X} + \Lpnorm{A u}{p}{J}{X}
    \le C \paren*{ \Lpnorm{f}{p}{J}{X} + \norm{u_0}{\initialspace} }.
    \label{eq:mr}
\end{equation}
We moreover assume $0 \in \rho(A)$ for simplicity.
Typical examples are elliptic operators of $2m$-th order on $L^q(\Omega)$ (cf.~\cite{MR2006641}) and the Stokes operator on $L^q_\sigma(\Omega)$ (cf.~\cite{zbMATH03627669,MR2602914}), where $\Omega \subset \bR^N$ is a suitable domain and $q \in (1,\infty)$. 
Moreover, under suitable assumptions, finite element approximation of an elliptic operator (say $A_h$) has maximal regularity on the corresponding finite element space (say $X_h$) equipped with the $L^q$-norm for $q \in (1,\infty)$ uniformly in $h$ (see \cite{MR2218965,MR3614012,MR3854049,MR3395142,MR3778340,MR4081914}).

It is known that an operator having maximal regularity generates a bounded analytic semigroup on $X$.
In particular, together with $0 \in \rho(A)$, there exists $\delta_0 \in (0, \pi/2 )$ such that the resolvent set $\rho(A)$ includes a sector domain $\Sigma_{\delta_0} \coloneqq \Set{z \in \bC \setminus \{0\} \given |\arg z| > \delta_0}$ and the resolvent estimate
\begin{equation}
    \norm{(\lambda I - A)^{-1}v}{X} \le C (1+|\lambda|)^{-1} \norm{v}{X}   
    \label{eq:resolvent}
\end{equation} 
holds for all $v \in X$ and $\lambda \in \Sigma_{\delta_0}$.
Maximal regularity plays an important role in the theory of mild solutions to nonlinear partial differential equations.
For more details of maximal regularity and its application, we refer the reader to \cite{MR2006641} and references therein.

The aim of this paper is to investigate the temporally discrete counterpart of maximal regularity.
As a discretization method, we focus on the discontinuous Galerkin (DG) time-stepping method for \eqref{eq:cauchy}.
Let $\cJt = \{ J_n \}_{n=1}^N$ be a temporal mesh of the interval $J$ with $N$ pieces defined by
\begin{equation}
    J_n = (t_{n-1}, t_n), \qquad 0 = t_0 < t_1 < \dots < t_N = T,
\end{equation}
and we set $\tau_n = t_n-t_{n-1}$ and $\tau = \max_n \tau_n$.
We assume that the family of meshes $\{ \cJt \}_\tau$ is quasi-uniform. Namely, there exists $C>0$ such that, for any mesh $J_\tau$, 
\begin{equation}
    \tau_n \ge C \tau, \qquad \forall J_n \in \cJt.
\end{equation}
We introduce the $X$-valued broken Sobolev space associated to the mesh $\cJt$ by
\begin{equation}
    \brokenSobolev \coloneqq \Set{v \in L^1_\mathrm{loc}(J;X) \given v|_{J_n} \in W^{1,1}(J_n;X), \, \forall J_n \in \cJt}.
\end{equation}
and for $v \in \brokenSobolev$ we use the following notation: 
\begin{equation}
    v^{n,\pm} \coloneqq \lim_{s \downarrow 0} v(t_n \pm s), \quad \jump{v}^n \coloneqq v^{n,+} - v^{n,-}.
\end{equation}
We then define the space of $X$-valued piecewise polynomials of degree $r$ associated to $\cJt$ by
\begin{align}
    S^n_\tau(X) &\coloneqq \cP^r(J_n; X) \coloneqq \Set*{v_\tau = \sum_{j=0}^r v_j t^j \given v_j \in X ,\,j=0,\dots,r}, \\ 
    S_\tau(X) &\coloneqq \Set*{v_\tau \in \brokenSobolev \given v_\tau|_{J_n} \in S^n_\tau(X) ,\,n=1,\dots,N}.
\end{align} 
We write $\cP^r(J') \coloneqq \cP^r(J'; \bR)$ for an interval $J' \subset \bR$ for brevity. 
Finally, we denote the dual space of $X$ by $X'$ and the duality pairing by $\dual{v}{\varphi}$ for $v \in X$ and $\varphi \in X'$.
We put the element of $X'$ on the right side, because $X'$ is considered as the space of test functions throughout this paper.

The DG time-stepping method for \eqref{eq:cauchy} is now formulated as follows (cf.~\cite{MR826227,MR2249024}).
Find $u_\tau \in S_\tau(D(A))$ that satisfies
\begin{equation}
    \begin{dcases}
        \sum_{n=1}^N \int_{J_n} \dual*{\partial_t u_\tau + Au_\tau}{\varphi_\tau} dt + \sum_{n=1}^N \dual*{\jump{u_\tau}^{n-1}}{\varphi_\tau^{n-1,+}} 
        = \int_{J} \dual*{f}{\varphi_\tau} dt,
        & \forall \varphi_\tau \in S_\tau(X'), \\[1ex]
        u_\tau^{0,-} = u_0.
    \end{dcases}
    \label{eq:DG}
\end{equation} 
By the discontinuity of the test function $\varphi_\tau$, the solution $u_\tau$ satisfies
\begin{equation}
    \int_{J_n} \dual*{\partial_t u_\tau + Au_\tau}{\varphi_\tau} dt + \dual*{\jump{u_\tau}^{n-1}}{\varphi_\tau^{n-1,+}} = \int_{J_n} \dual*{f}{\varphi_\tau} dt,
    \quad \forall \varphi_\tau \in S^n_\tau(X'),
    \label{eq:DG-n}
\end{equation} 
for all $n=1,\dots,N$. 
The well-posedness of \eqref{eq:DG} can be shown by the argument similar to that of \cite[p.~1322]{MR1620144} since $A$ generates a bounded analytic semigroup.
Moreover, focusing on the sequence $(u_\tau^{n,-})_{n=0}^N$ only, one may regard the DG time-stepping method as a one-step method, which is known to be (strongly) A-stable.
We refer the reader to \cite{MR826227,MR2249024} for the detail of theoretical aspects of the DG time stepping method.

The main result of the present paper is to show the following estimate that corresponds to the maximal regularity \eqref{eq:mr}.
\begin{theorem}[Discrete maximal regularity]
    \label{thm:dmr} 
    Let $X$ be a reflexive Banach space, $A$ be a densely defined closed linear operator on $X$, 
    $p \in (1,\infty)$, $f \in L^p(J;X)$, and $u_0 \in \initialspace$.
    Assume that $0 \in \rho(A)$, $A$ has maximal regularity, the family of meshes $\{ \cJt \}_\tau$ is quasi-uniform, and $r \ge 1$.
    Then, the solution $u_\tau \in S_\tau(D(A))$ of \eqref{eq:DG} satisfies the discrete maximal regularity
    \begin{multline}
        \label{eq:dmr}
        \paren*{ \sum_{n=1}^N \Lpnorm{\partial_t u_\tau}{p}{J_n}{X}^p }^{1/p} 
        + \Lpnorm{Au_\tau}{p}{J}{X} 
        + \paren*{\sum_{n=1}^N \norm*{\frac{\jump{u_\tau}^{n-1}}{\tau_n}}{X}^p \tau_n }^{1/p}  \\ 
        \le C \paren*{ \Lpnorm{f}{p}{J}{X} + \norm{u_0}{\initialspace} },
    \end{multline}
    where $C$ is independent of $\tau$, $f$, and $u_0$.
\end{theorem}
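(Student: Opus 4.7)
The plan is a duality argument built on temporally regularized Green's functions, in the spirit of Schatz--Wahlbin-type pointwise analyses adapted to the $L^p$ setting. \textbf{Step 1 (reduction and duality).} Using \eqref{eq:DG-n}, the piecewise time derivative $\partial_t u_\tau$ can be expressed through $f$, $Au_\tau$, and the jump $\jump{u_\tau}^{n-1}$, so it suffices to control $\Lpnorm{Au_\tau}{p}{J}{X}$ and the jump term in \eqref{eq:dmr}. The $u_0$-contribution is isolated by splitting $u_\tau$ into the DG evolution of the initial datum (handled by standard semigroup arguments using \eqref{eq:resolvent} and the interpolation-space norm on $\initialspace$) and the DG solution with vanishing initial datum. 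Reflexivity of $X$ then reduces the task to bounding $\int_J \dual{Au_\tau}{\varphi}\,dt$ for $\varphi \in L^{p'}(J;X')$ of unit norm, and analogously $\sum_n \dual{\jump{u_\tau}^{n-1}/\tau_n}{\psi_n}\tau_n$ for dual sequences $\{\psi_n\} \subset X'$ with unit discrete norm.

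\textbf{Step 2 (regularized Green's function and representation).} For each pair $(\tiln,\tilt)$ with $\tilt \in J_{\tiln}$ and each unit $v \in X'$, I would introduce a regularized Dirac $\regdelta \in \cP^r(J_{\tiln})$ supported in $J_{\tiln}$ and characterized by $\int_{J_{\tiln}} \regdelta(t)\, q(t)\, dt = q(\tilt)$ for all $q \in \cP^r(J_{\tiln})$. Let $\reggamma$ solve the backward continuous dual problem $-\partial_t \reggamma + A^* \reggamma = \regdelta\, v$, $\reggamma(T) = 0$, and let $\reggtau \in S_\tau(D(A^*))$ denote its backward DG approximation. Using $\reggtau$ as a test function in \eqref{eq:DG}, integrating by parts in time, and invoking Galerkin orthogonality together with the reproducing property of $\regdelta$ yield a representation of the form
\[
    \dual{A u_\tau(\tilt)}{v}
    = \int_J \dual{f}{\reggtau}\, dt - \dual{u_0}{\reggtau^{0,+}},
\]
and an analogous formula localizing $\tau_n^{-1}\jump{u_\tau}^{n-1}$ at the node $t_{n-1}$ via a boundary-concentrated regularized delta.

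\textbf{Step 3 (weighted error estimate; main obstacle).} The technical core is a uniform bound of the form $\Lpnorm{\weight\, \reggtau}{p'}{J}{X'} \le C$ with a weight $\weight$ encoding the parabolic decay of the Green's function away from $\tilt$. Inserting this into the representation formula and applying Hölder's inequality (and a discrete Hölder/summation by parts for the jump contribution) then delivers \eqref{eq:dmr}. Decomposing $\reggtau = \reggamma - \err$ with $\err = \reggamma - \reggtau$, the continuous part is controlled via maximal regularity of $A^*$ on dyadic annuli $\{t : |t - \tilt| \sim 2^k \tau\}$, while the DG error $\err$ requires a sharp weighted stability/error estimate: this is the main obstacle, since the naive Galerkin-orthogonality-plus-interpolation argument inevitably yields a logarithmic factor via an integral of the form $\int dt/(\tau + t) \sim \log(T/\tau)$. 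Removing this factor requires working directly in the $L^{p'}$-weighted norm rather than pointwise, exploiting A-stability of the DG scheme together with the resolvent bound \eqref{eq:resolvent} within each dyadic region, so that the $L^p$--$L^{p'}$ Hölder pairing against $f$ absorbs what would otherwise be the logarithm.
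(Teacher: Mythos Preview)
Your overall architecture---duality via a temporally regularized Green's function, splitting off the initial datum, and reducing \eqref{eq:dmr} to a weighted estimate on the dual DG solution---matches the paper's strategy. But Step~3, where the logarithm is supposed to disappear, has a genuine gap, and Step~2 contains an error in the representation formula.

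First the minor issue: testing \eqref{eq:DG} with $\reggammatau$ (your $\reggtau$) reproduces $\dual{u_\tau(\tilt)}{v}$, not $\dual{Au_\tau(\tilt)}{v}$. To localize $Au_\tau$ you must test with $A'\reggammatau$ (equivalently, test the equation satisfied by $Au_\tau$ with $\reggammatau$), which yields $\dual{Au_\tau(\tilt)}{v}=\int_J\dual{f}{A'\reggammatau}\,dt$ when $u_0=0$. The paper then splits this as $\int_J\dual{Au}{\regdelta v}\,dt-\int_J\dual{f}{A'(\reggamma-\reggammatau)}\,dt$; the first piece is handled by continuous maximal regularity after integrating over $\tilt$, and only the \emph{error} $A'(\reggamma-\reggammatau)$ needs a weighted bound. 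Your plan to bound $\norm{\weight\,\reggammatau}{L^{p'}}$ (or $\norm{\weight\,A'\reggammatau}{L^{p'}}$) directly does not work: $A'\reggamma$ has only $(t-\tilt)^{-1}$-type decay, so no weight makes this quantity uniformly bounded.

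The main gap is the mechanism you invoke to avoid the logarithm. You propose that working ``directly in the $L^{p'}$-weighted norm'' and letting the ``H\"older pairing against $f$ absorb'' the log will suffice. It does not: the weighted estimate $\norm{\weight^\alpha A'(\reggamma-\reggammatau)}{L^{p'}}\le C\tau^{\alpha-1/p}$ must itself be log-free, and this is where the hypothesis $r\ge 1$ enters. The paper splits the error as $(\reggamma-I_\tau\reggamma)+(I_\tau\reggamma-\reggammatau)$; the discrete part $I_\tau\reggamma-\reggammatau$ satisfies a DG equation with right-hand side $-A'(\reggamma-I_\tau\reggamma)$ and is expressed via a Duhamel formula in the rational functions $R_{i,j}(\tau_n A')$. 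The crucial point is that with $r\ge1$ the interpolation error decays like $\tau^2(t_{n-1}-t_\tiln)^{-2}$ (one uses \emph{second} time derivatives of $\reggamma$), so the resulting discrete convolution $\sum_m(t_n-t_m)^{-2}(t_{m-1}-t_\tiln)^{-2}$ sums to $C(t_n-t_\tiln)^{-2}$ without a logarithm. With only first-order decay (the $r=0$ case) one gets $\sum_m(t_n-t_m)^{-2}(t_{m-1}-t_\tiln)^{-1}$, and the log is unavoidable. Your proposal mentions neither the interpolation split, nor the Duhamel/rational-function representation, nor the role of $r\ge1$; A-stability and dyadic decomposition alone will reproduce the logarithmic estimates of \cite{MR3498514,MR3606467}.

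A smaller point: the jump and $\partial_t u_\tau$ terms do not need separate Green's functions. Testing \eqref{eq:DG-n} with $(t-t_{n-1})\Psi$ and with a constant, respectively, shows both are controlled by $\norm{Au_\tau-f}{L^p(J_n;X)}$ (the paper's Lemmas~2.2--2.3), so only the $Au_\tau$ estimate is needed.
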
 
The assumption $r \ge 1$ is essential in the proof of the above result (see \cref{rem:r}).
Hence the case $r=0$ is not covered, although in this case the scheme \eqref{eq:DG} becomes a variant of the backward Euler method.

Temporal discretization of maximal regularity have been studied in recent decades (e.g.~\cite{MR1299329,MR1853519,MR3244339,MR3582825,MR3549514,MR3606467,MR4410757,MR4368992}) and is applied to numerical analysis of linear and nonlinear parabolic problems (e.g.~\cite{MR3498514,MR3620143,MR3683415,MR3778340,MR3857906,MR4246878}).
For the DG time-stepping method, since it is A-stable as a one-step method as mentioned above, it has (one-step version of) discrete maximal regularity by the result of \cite{MR3582825} if the temporal mesh is uniform and the Banach space $X$ is UMD.

The estimates over the intervals $J_n$ are investigated in \cite{MR3498514,MR3606467,MR4368992}. 
In \cite{MR3498514,MR3606467}, the estimates of the form \eqref{eq:dmr} with $C$ replaced by $C |\log (T/\tau)|$ are established with locally quasi-uniform temporal meshes, when $A$ is an elliptic operator and $X=L^q(\Omega)$ for $q \in [1,\infty]$. 
The approach of \cite{MR3498514,MR3606467} does not rely on the maximal regularity for $A$ (but on semigroup property only).
This allows them to consider the end-point cases ($X=L^1(\Omega)$ or $L^\infty(\Omega)$), where the appearance of the logarithmic factor is natural.
However, the logarithmic-free estimate is better for the reflexive case (i.e., $X=L^q(\Omega)$ with $1 < q < \infty$). 

To remove the logarithmic factor, the authors of \cite{MR4368992} considered the reconstruction of $u_\tau$, which is originally introduced in \cite{MR2249675}.
By regarding the DG time-stepping method as a modified Radau IIA method, they established the estimate \eqref{eq:dmr} with $u_\tau$ replaced by the reconstruction when the temporal mesh is uniform and $X$ is UMD. 
Although this result was successfully applied to a posteriori error estimates, the estimate of the form \eqref{eq:dmr} was still open.

Our approach to establish the estimate \eqref{eq:dmr} is different from both of the above approaches.
In order to show the discrete maximal regularity, we introduce the \emph{temporally} regularized Green's function.
Fix an interval $J_\tiln$ and time $\tilt \in J_\tiln$ arbitrarily and introduce a regularized delta function $\regdelta(t)$ depending only on the time variable $t$.
Precise definition will be given in Section~\ref{sec:green}.
Then, we consider the solution of the dual parabolic equation 
\begin{equation}
    \begin{cases}
        -\partial_t \reggamma(t) + A'\reggamma(t) = \regdelta(t) \varphi, & t \in J, \\ 
        \reggamma(T) = 0
    \end{cases}
\end{equation}
with $\varphi \in X'$ arbitrary, where $A'$ is the dual operator of $A$ defined on $X'$.
We call $\reggamma$ the temporally regularized Green's function, and we will show that the discrete maximal regularity \eqref{eq:dmr} is reduced to a weighted error estimate for the DG approximation of $\reggamma$ (see~\cref{lem:dmr-to-weighted}).

The key point to show the weighted error estimate is to split the error $\reggamma - \reggammatau$ into the interpolation error $\reggamma - I_\tau \reggamma$ and the ``discrete error'' $I_\tau \reggamma - \reggammatau$ as in the literature, where $\reggammatau \in S_\tau(D(A'))$ is the DG approximation of $\reggamma$ and $I_\tau$ is a suitable interpolation operator.
Then, it is known that the discrete error satisfies \eqref{eq:DG} with $f$ replaced by $-A'(\reggamma - I_\tau \reggamma) $ (cf.~\cite[p.~208]{MR2249024}),
which allows us to obtain an expression formula for the discrete error in terms of the interpolation error (cf.~\cite{MR3606467}).
We then investigate the rational functions appearing in the formula more rigorously than the literature (such as \cite{MR1620144}) and finally obtain the weighted error estimate.

In the end of this paper, we will present three simple corollaries.
We will show that the estimate \eqref{eq:dmr} includes the one-step version of discrete maximal regularity.
Moreover, an optimal order error estimate (without logarithmic factor) for \eqref{eq:DG} will be derived by the same approach as in \cite{MR3606467}.
Finally, we mention the fully discrete case. 
Since $X$ and $A$ are not limited to the continuous case, fully discrete maximal regularity for a parabolic problem is immediately obtained, if a discrete elliptic operator $A_h$ has maximal regularity on the discrete space $X_h$ uniformly in $h$.
Optimal order error estimate is also obtained by the same proof as in \cite{MR3606467}.
Further applications, such as application to nonlinear problems, would be investigated in the future.

The rest of the present paper is organized as follows.
In Section~\ref{sec:preliminaries}, we collect some preliminary results related to the DG method.
Moreover, we will show that the estimate for $Au_\tau$ is essential to show \eqref{eq:dmr} in the last part of this section.
In Section~\ref{sec:green}, we will introduce the temporally regularized Green's function and show that \eqref{eq:dmr} is reduced to a weighted error estimate for $\reggamma - \reggammatau$.
The proof of the main result is given in Section~\ref{sec:proof}.
We first show the case of $u_0 = 0$ via the temporally regularized Green's function, and then consider the initial value problem with $f = 0$.
Finally, in Section~\ref{sec:application}, we mention the relation between our estimate and existing discrete maximal regularity, and we show an optimal order error estimate for both the semi- and fully-discrete cases.

\section{Preliminaries}
\label{sec:preliminaries}

We collect some preliminary estimates for the proof of \eqref{eq:dmr}.
Throughout this paper, the symbol $C$ stands for a general constant independent of $\tau$ and other parameters.
Its value may be different at each appearance.

\subsection{Properties of the DG time-stepping method}

Define a bilinear form $B_\tau$ by
\begin{equation}
    B_\tau(v,\varphi) 
    \coloneqq \sum_{n=1}^N \int_{J_n} \dual*{\partial_t v + Av}{\varphi} dt 
    + \sum_{n=2}^N \dual*{\jump{v}^{n-1}}{\varphi^{n-1,+}}
    + \dual*{v^{0,+}}{\varphi^{0,+}}
\end{equation}
for $v \in \brokenSobolev[D(A)]$ and $\varphi \in \brokenSobolev[X']$. 
Then, the DG scheme \eqref{eq:DG} is equivalent to
\begin{equation}
    B_\tau(u_\tau,\varphi_\tau) = \int_J \dual*{f}{\varphi_\tau} dt + \dual*{u_0}{\varphi_\tau^{0,+}}, \qquad \forall \varphi_\tau \in S_\tau(X).
\end{equation}
Moreover, the solution $u \in L^p(J; D(A)) \cap W^{1,p}(J; X)$ of the Cauchy problem \eqref{eq:cauchy} satisfies
\begin{equation}
    B_\tau(u,\varphi) = \int_J \dual*{f}{\varphi} dt + \dual*{u_0}{\varphi^{0,+}}, \qquad \forall \varphi \in \brokenSobolev[X'],
\end{equation}
which implies the Galerkin orthogonality (compatibility)
\begin{equation}
    B_\tau(u-u_\tau,\varphi_\tau) = 0, \qquad \forall \varphi_\tau \in S_\tau(X).
\end{equation}

Denote the dual operator of $A$ by $A'$ and define the dual form of $B_\tau$ by
\begin{equation}
    B'_\tau(v,\varphi) 
    \coloneqq \sum_{n=1}^N \int_{J_n} \dual*{v}{-\partial_t \varphi + A' \varphi} dt 
    - \sum_{n=1}^{N-1} \dual*{v^{n,-}}{\jump{\varphi}^n} 
    + \dual*{v^{N,-}}{\varphi^{N,-}}
\end{equation}
for $v \in \brokenSobolev$ and $\varphi \in \brokenSobolev[D(A')]$. 
Then, it is clear that 
\begin{equation}
    B_\tau(v,\varphi) = B'_\tau(v,\varphi)
    \label{eq:duality}
\end{equation}
for any $v \in \brokenSobolev[D(A)]$ and $\varphi \in \brokenSobolev[D(A')]$.

It is known that the solution of \eqref{eq:DG} is expressed by using rational approximations of the semigroup $e^{-tA}$.
Let $\{ \phi_j \}_{j=0}^r \subset \cP^r(0,1)$ be the Lagrange basis functions with respect to the nodal points $\{ j/r \}_{j=0}^r$, and let $\phi^n_j(t) = \phi_j((t-t_{n-1})/\tau_n)$.
We let $u_\tau \in S_\tau(X)$ be the solution of \eqref{eq:DG} and express $u_\tau|_{J_n} \in \cP^r(J_n;X)$ as
\begin{equation}
    u_\tau(t) = \sum_{j=0}^r U^n_j \phi^n_j(t), \qquad U^n_j \in X, \quad t \in J_n.
    \label{eq:local-basis}
\end{equation}
Then, $U^n_j$ has the following expression (cf.~\cite[p.~1322]{MR1620144}, \cite[\S~4.2]{MR3606467}).

\begin{lemma}\label{lem:expression-DG}
    Let $U^n_j$ as above. Then, there exist polynomials $\hat{q}$ and $q_{i,j}$ ($i,j=0,\dots,r$) that satisfy
    \begin{equation}
        U^n_i = R_{i,0}(\tau_n A) u_\tau^{n-1} + \sum_{j=0}^r R_{i,j}(\tau_n A) \int_{J_n} f(t) \phi^n_j(t) dt,
        \qquad R_{i,j} = \hat{q}^{-1} q_{i,j},
        \label{eq:expression-DG}
    \end{equation}
    for any $n=1,\dots,N$.
    Moreover, $\hat{q}$, $q_{i,j}$, and $R_{i,j}$ have the following properties:
    \begin{enumerate}[label=(\arabic*),font=\upshape]
        \item $\deg \hat{q} = r+1$, $\deg q_{i,j} \le r$
        \item $R_{i,j}$ is holomorphic and bounded in the right half plain of $\bC$
        \item For $\delta \in (\delta_0,\pi/2)$ and $\lambda \in \Gamma_\delta \coloneqq \partial \Sigma_\delta$,
        \begin{equation}
            |R_{i,0}(\lambda)| \le \frac{1}{1+C |\lambda|}, \quad  
            |R_{i,j}(\lambda)| \le \frac{C}{1+C|\lambda|},
            \qquad 
            i,j = 0,\dots,r
            \label{eq:rational-DG}
        \end{equation}
        hold with $C$ depending only on $r$ and $\delta$, where $\delta_0$ is as in \eqref{eq:resolvent}.
    \end{enumerate}
\end{lemma}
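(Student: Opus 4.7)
The plan is to reduce the local DG equation \eqref{eq:DG-n} to a finite-dimensional operator system for the coefficients $(U^n_j)_{j=0}^r$ and then invert it by Cramer's rule. Substituting the expansion \eqref{eq:local-basis} together with the test function $\varphi_\tau = \psi\,\phi^n_k$, for arbitrary $\psi \in X'$ and $k = 0, \dots, r$, into \eqref{eq:DG-n}, and using $\phi_j(0) = \delta_{j,0}$ together with the reference-interval matrices $m_{jk} = \int_0^1 \phi_j \phi_k\,ds$ and $a_{jk} = \int_0^1 \phi_j'\phi_k\,ds$, one arrives at the system
\begin{equation*}
    \sum_{j=0}^r \bigl[(a_{jk}+\delta_{j,0}\delta_{k,0})\,I + \tau_n m_{jk}\,A\bigr] U^n_j
    = \delta_{k,0}\,u_\tau^{n-1,-} + \int_{J_n} f(t)\,\phi^n_k(t)\,dt, \qquad k = 0, \dots, r.
\end{equation*}
Writing $\tilde A = (a_{jk}+\delta_{j,0}\delta_{k,0})$ and $\tilde M = (m_{jk})$, the coefficient array has the tensor structure $\tilde A \otimes I + \tau_n \tilde M \otimes A$.

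Treating $\tau_n A$ formally as a complex parameter $\lambda$, I would apply Cramer's rule to the scalar matrix $\tilde A + \lambda\,\tilde M$. Since the mass matrix $\tilde M$ is nonsingular, the determinant $\hat q(\lambda) \coloneqq \det(\tilde A + \lambda\,\tilde M)$ is a polynomial of exact degree $r+1$, while each adjugate entry $q_{i,k}(\lambda)$ is a determinant of an $r\times r$ submatrix and hence a polynomial of degree at most $r$; this establishes (1) with $R_{i,k} = q_{i,k}/\hat q$ and yields the expected expression $U^n_i = \sum_k R_{i,k}(\tau_n A)[\delta_{k,0}u_\tau^{n-1,-} + \int_{J_n} f \phi^n_k dt]$. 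The operator-valued identity \eqref{eq:expression-DG} then follows by the holomorphic (Dunford) functional calculus for the sectorial operator $\tau_n A$, provided every zero of $\hat q$ avoids $\sigma(\tau_n A)$.

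The main technical point is assertion (2): every root of $\hat q$, equivalently every eigenvalue of $-\tilde M^{-1}\tilde A$, must have strictly negative real part. This is the well-known (strong) A-stability of the DG time-stepping method, associated with its Radau~IIA-type structure, and I would cite \cite{MR1620144,MR2249024,MR3606467}. A self-contained verification replaces $A$ by a scalar $\lambda$ with $\re\lambda \ge 0$, considers the homogeneous problem on the reference interval, and tests against $\bar u_\tau$: the real part gives $\tfrac{1}{2}|u_\tau(1)|^2 + \tfrac{1}{2}|u_\tau(0^+)|^2 + (\re\lambda)\int_0^1|u_\tau|^2\,ds = 0$, forcing $u_\tau(0^+) = u_\tau(1) = 0$ and, in the purely imaginary case $\lambda = i\beta$, the equation reduces to $\int (u_\tau' + i\beta u_\tau)v\,ds = 0$ for all $v \in \cP^r$; choosing $v = \overline{u_\tau' + i\beta u_\tau} \in \cP^r$ then yields $u_\tau' + i\beta u_\tau \equiv 0$ together with $u_\tau(0^+) = 0$, so $u_\tau \equiv 0$.

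Once (2) is established, assertion (3) is elementary. The strict inequality $\deg q_{i,j} \le r < r+1 = \deg \hat q$ gives $|R_{i,j}(\lambda)| = O(|\lambda|^{-1})$ as $|\lambda|\to\infty$ uniformly on closed sectors, and $\hat q(0) = \det \tilde A \ne 0$ (which follows since the scheme is uniquely solvable at $A = 0$) secures boundedness near the origin; holomorphicity on a sector slightly wider than $\overline{\Sigma_{\delta_0}}$ interpolates to yield $|R_{i,j}(\lambda)| \le C/(1+|\lambda|)$ on $\Gamma_\delta$, equivalent to the stated bound. The sharper numerator ``$1$'' for $R_{i,0}$ reflects the exact value $R_{i,0}(0) = 1$, expressing that the scheme reproduces the constant solution $U^n_i \equiv u_\tau^{n-1,-}$ when $A = 0$ and $f \equiv 0$. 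The main obstacle is therefore property (2); everything else is routine linear algebra and complex analysis.
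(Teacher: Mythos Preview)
The paper does not supply its own proof of this lemma; it simply cites \cite[p.~1322]{MR1620144} and \cite[\S~4.2]{MR3606467}. Your plan---reduce \eqref{eq:DG-n} to the matrix pencil $\tilde A+\lambda\tilde M$ on the reference interval, invert by Cramer's rule to get $R_{i,j}=q_{i,j}/\hat q$, and verify via the energy identity that $\hat q$ has no zeros in the closed right half plane---is precisely the standard argument in those references, and your self-contained A-stability verification (test the homogeneous scalar problem against $\bar u_\tau$) is correct.

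One point deserves tightening. Your justification of the numerator ``$1$'' in the bound for $R_{i,0}$ via $R_{i,0}(0)=1$ is necessary but not sufficient: a holomorphic function equal to $1$ at the origin may well exceed $1$ in modulus on nearby rays. For $i=r$ the correct argument is the \emph{non-homogeneous} energy identity (take $u_0\ne 0$, $f=0$, test with $\bar u_\tau$, and obtain $\tfrac12|u_\tau(1)|^2+(\re\lambda)\int_0^1|u_\tau|^2\le\tfrac12|u_0|^2$, hence $|R_{r,0}(\lambda)|\le 1$ on $\{\re\lambda\ge 0\}$); together with the $O(|\lambda|^{-1})$ decay this yields $|R_{r,0}(\lambda)|\le 1/(1+C|\lambda|)$ on $\Gamma_\delta$. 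For $i\ne r$ the numerator $1$ is in fact not needed downstream: in \cref{cor:Duhamel-DG} only $R_{r,0}$ appears under the long product, while $R_{i,0}(\tau_nA)$ contributes a single factor, so the generic bound $|R_{i,0}(\lambda)|\le C/(1+C|\lambda|)$ already suffices for the proof of \cref{lem:A-res-L^infty}.
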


Using \eqref{eq:expression-DG} repeatedly, we obtain the following Duhamel-type formula.
\begin{corollary}\label{cor:Duhamel-DG}
    For $n=1,\dots,N$ and $i=0,\dots,r$, we have
    \begin{multline}
        U^n_i 
        = R_{i,0}(\tau_n A) \paren*{ \prod_{l=1}^{n-1} R_{r,0}(\tau_l A) } u_0  \\ 
        + \sum_{m=1}^{n-1} R_{i,0}(\tau_n A) \paren*{ \prod_{l=m+1}^{n-1} R_{r,0}(\tau_l A) } \sum_{j=0}^r R_{r,j}(\tau_m A) \int_{J_m} f(t) \phi^m_j(t) dt \\ 
        + \sum_{j=0}^r R_{i,j}(\tau_n A) \int_{J_n} f(t) \phi^n_j(t) dt.
    \end{multline}
    Here, we set $\sum_{m=1}^0 = 0$ and $\prod_{l=n}^{n-1} = 1$.
\end{corollary}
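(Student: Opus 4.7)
The plan is straightforward induction on $n$, starting from the one–step formula \eqref{eq:expression-DG} of \cref{lem:expression-DG}. The key observation is that, because $\{\phi_j\}_{j=0}^r$ are the Lagrange basis functions with respect to the nodes $\{j/r\}_{j=0}^r$, we have $\phi_j(1) = \delta_{jr}$, hence $u_\tau^{n-1,-} = U^{n-1}_r$ for $n \ge 2$, while $u_\tau^{0,-} = u_0$ by the initial condition in \eqref{eq:DG}. Therefore the term $R_{i,0}(\tau_n A) u_\tau^{n-1}$ in \eqref{eq:expression-DG} can be replaced by $R_{i,0}(\tau_n A) U^{n-1}_r$ whenever $n \ge 2$, and by $R_{i,0}(\tau_1 A) u_0$ when $n = 1$.

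Next, I would argue by induction on $n$. The base case $n = 1$ is immediate from \eqref{eq:expression-DG} with the empty-product/empty-sum conventions. For the inductive step, applying \eqref{eq:expression-DG} with index $i = r$ at step $n - 1$ yields
\begin{equation*}
    U^{n-1}_r = R_{r,0}(\tau_{n-1} A) U^{n-2}_r + \sum_{j=0}^r R_{r,j}(\tau_{n-1} A) \int_{J_{n-1}} f(t)\, \phi^{n-1}_j(t)\, dt,
\end{equation*}
and substituting this into $U^n_i = R_{i,0}(\tau_n A) U^{n-1}_r + \sum_{j=0}^r R_{i,j}(\tau_n A) \int_{J_n} f(t)\, \phi^n_j(t)\, dt$ produces one new summand indexed by $m = n-1$ (the source integral on $J_{n-1}$ multiplied by $R_{i,0}(\tau_n A) R_{r,j}(\tau_{n-1} A)$) together with a term $R_{i,0}(\tau_n A) R_{r,0}(\tau_{n-1} A) U^{n-2}_r$ to which the inductive hypothesis at step $n-2$ applies. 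After invoking the inductive hypothesis, we obtain exactly the claimed formula.

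A minor technical point worth recording is that all the rational functions appearing in the product $\prod_{l=m+1}^{n-1} R_{r,0}(\tau_l A)$ commute, since each is (by construction) a function of $A$ alone, so the order of factors is immaterial. No other subtlety arises: the whole argument is bookkeeping for the telescoped recursion, and the main (minor) obstacle is simply keeping careful track of the index ranges and the conventions $\sum_{m=1}^0 = 0$ and $\prod_{l=n}^{n-1} = 1$, which ensure the formula reduces correctly to \eqref{eq:expression-DG} at $n = 1$.
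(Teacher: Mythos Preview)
Your proposal is correct and matches the paper's approach: the paper simply states that the formula follows by ``using \eqref{eq:expression-DG} repeatedly,'' and your induction argument is precisely the natural way to make this rigorous. One small streamlining: in the inductive step you unfold one step of \eqref{eq:expression-DG} at level $n-1$ and then invoke the hypothesis at level $n-2$, but it is slightly cleaner to apply the inductive hypothesis directly to $U^{n-1}_r$ (the $n-1$ case with $i=r$) and substitute into $U^n_i = R_{i,0}(\tau_n A)\,U^{n-1}_r + \sum_j R_{i,j}(\tau_n A)\int_{J_n} f\phi^n_j\,dt$; this avoids the extra layer of unpacking and handles the case $n=2$ without needing to reference $U^0_r$.
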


\subsection{Interpolation operator}

Define $I_\tau^n \colon W^{1,1}(J_n;X ) \to \cP^r(J_n; X)$ by
\begin{align}
    &(I_\tau v)^{n,-} = v(t_n), \\ 
    &\int_{J_n} (I_\tau v - v) q dt = 0, \quad \forall q \in \cP^{r-1}(J_n) 
\end{align}
(cf.~\cite[p.~207]{MR2249024}) and define an ``interpolation'' operator $I_\tau \colon W^{1,1}(J; X) \to S_\tau(X)$ by
\begin{equation}
    (I_\tau v)|_{J_n} = I_\tau^n v, \qquad \forall n=1,\dots,N.
\end{equation}
Then, the following error estimate holds without quasi-uniformity.
\begin{lemma}
    Let $r \ge 1$, $p \in [1,\infty]$, and $0 \le l \le m \le r$. Then, the error estimate 
    \begin{equation}
        \norm{\partial_t^l (I_\tau^n v - v)}{L^p(J_n; X)}
        \le C \tau_n^{m+1-l} \norm{\partial_t^{m+1} v}{L^p(J_n;X)},
        \qquad v \in W^{m+1}(J_n; X)
        \label{eq:int-err}
    \end{equation}
    holds.
\end{lemma}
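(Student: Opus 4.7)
The plan is to reduce to a reference-interval estimate via affine scaling and conclude by a Bramble--Hilbert-type argument. Set $\hat v(s) \coloneqq v(t_{n-1} + \tau_n s)$ for $s \in (0,1)$. A change of variables yields the scaling identity
\[
    \norm{\partial_s^k \hat v}{L^p(0,1;X)} = \tau_n^{k - 1/p}\, \norm{\partial_t^k v}{L^p(J_n;X)},
    \qquad 0 \le k \le m+1,
\]
(with the usual interpretation for $p = \infty$), and transports the interpolation operator into the reference-interval interpolant $\hat I \colon W^{1,1}(0,1;X) \to \cP^r(0,1;X)$ characterized by $(\hat I \hat w)(1) = \hat w(1)$ and $\int_0^1 (\hat I \hat w - \hat w)\, q\, ds = 0$ for every $q \in \cP^{r-1}(0,1)$; in particular $(I_\tau^n v - v)(t_{n-1} + \tau_n s) = ((\hat I - \mathrm{id})\hat v)(s)$.

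Next I would verify that $\hat I$ is well-defined and reproduces polynomials of degree at most $r$. The defining $r+1$ linear conditions determine a unique element of $\cP^r(0,1;X)$: a homogeneous polynomial satisfying them would be $L^2$-orthogonal to $\cP^{r-1}$ and vanish at $s=1$, hence must be zero. Consequently $\hat I q = q$ for every $q \in \cP^r(0,1;X)$, and since $m \le r$ the operator $\hat I - \mathrm{id}$ annihilates $\cP^m(0,1;X)$. Moreover $\hat I$ is bounded from $W^{1,1}(0,1;X)$ into any $W^{l,p}(0,1;X)$-norm on its finite-dimensional image, because its defining functionals are continuous on $W^{1,1}(0,1;X) \hookrightarrow C([0,1];X)$ and all norms on $\cP^r(0,1;X) \cong X^{r+1}$ are equivalent.

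A standard Bramble--Hilbert argument then yields the reference-interval estimate
\[
    \norm{\partial_s^l (\hat I \hat v - \hat v)}{L^p(0,1;X)} \le C\, \norm{\partial_s^{m+1} \hat v}{L^p(0,1;X)}
\]
for $0 \le l \le m \le r$: choosing an averaged Taylor polynomial $q \in \cP^m(0,1;X)$ of $\hat v$ satisfying $\norm{\hat v - q}{W^{m+1,p}(0,1;X)} \le C\, \norm{\partial_s^{m+1} \hat v}{L^p(0,1;X)}$, one uses $(\hat I - \mathrm{id}) q = 0$ together with the continuity just established to control $\partial_s^l (\hat I - \mathrm{id})(\hat v - q)$. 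Substituting the scaling identity for $k=l$ on the left and $k=m+1$ on the right inserts the factor $\tau_n^{-l+1/p}\cdot \tau_n^{m+1-1/p} = \tau_n^{m+1-l}$, which is exactly \eqref{eq:int-err}. There is no substantive obstacle here; the only point requiring slight care is to verify the Bramble--Hilbert step in the Bochner setting, which is routine since every ingredient is either a continuous linear functional or pointwise in $X$.
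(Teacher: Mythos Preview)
Your proof is correct and follows essentially the same approach as the paper's: reduce to the reference interval by affine scaling, use that $\hat I$ reproduces $\cP^r$, establish boundedness of $\hat I$, and conclude by a Bramble--Hilbert/Taylor-polynomial argument. The only cosmetic difference is that the paper invokes the closed graph theorem to get boundedness of $\hat I\colon W^{m+1,p}\to W^{l,p}$, whereas you argue directly via continuity of the defining functionals on $W^{1,1}\hookrightarrow C$ together with equivalence of norms on the finite-dimensional range; both routes are standard and yield the same conclusion.
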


The proof is standard; however, 
we here give a proof for the reader's convenience.

\begin{proof}
    Define an operator $\hat{I} \colon W^{1,1}(0,1;X) \to \cP^r(0,1;X)$ on the interval $[0,1]$ by
    \begin{equation}
        (\hat{I}v)(0) = v(0),
        \qquad 
        \int_0^1 (\hat{I}v - v) t^j dt = 0, \quad j=0,\dots,r-1.
    \end{equation}
    Then, it suffices to show
    \begin{equation}
        \norm{\partial_t^l (\hat{I} v - v)}{L^p(0,1; X)}
        \le C \norm{\partial_t^{m+1} v}{L^p(0,1;X)},
        \qquad v \in W^{m+1}(0,1; X)
        \label{eq:int-err-ref}
    \end{equation}
    by change of variables.
    Notice that $\hat{I}q = q$ for $q \in \cP^r(0,1;X)$ (see \cite[p.~208]{MR2249024}).

    Let us prove \eqref{eq:int-err-ref}.
    The operator $\hat{I}$ is bounded as a map from $W^{m+1,p}$ to $W^{l,p}$ by the closed graph theorem.
    Thus the operator $L \coloneqq \hat{I} - \mathrm{id} \colon W^{m+1,p} \to W^{l,p}$ is also bounded, where $\mathrm{id}$ is the identity map.
    Therefore, for any $q \in \cP^r(0,1;X)$, we have 
    \begin{equation}
        \norm{\partial_t^l (\hat{I} v - v)}{L^p(0,1; X)}
        = \norm{\partial_t^l L (v-q)}{L^p(0,1; X)}
        \le \norm{L}{} \norm{v-q}{W^{m+1,p}(0,1;X)}
    \end{equation}
    owing to $Lq = 0$, which implies \eqref{eq:int-err-ref} by the Taylor theorem.
\end{proof}

\subsection{The estimates for time derivative and jump}

The purpose of this subsection is to show that the estimates of $\partial_t u_\tau$ and $\jump{u_\tau}^{n-1}$ of the discrete maximal regularity \eqref{eq:dmr} are reduced to that of $Au_\tau$.

\begin{lemma}\label{lem:dudt}
    Let $p \in [1,\infty]$ and let $u_\tau \in S_\tau(D(A))$ be the solution of \eqref{eq:DG}.
    Then, for any $n = 1,\dots,N$, we have
    \begin{equation}
        \Lpnorm{\partial_t u_\tau}{p}{J_n}{X} \le C \Lpnorm{A u_\tau - f}{p}{J_n}{X},
    \end{equation}
    where $C$ is independent of $n$ and $\tau$.
\end{lemma}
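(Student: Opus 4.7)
The plan is to restrict the DG equation~\eqref{eq:DG-n} to test functions that annihilate the jump term, rescale to a reference interval, and then invert a finite-dimensional bilinear pairing. Since the desired bound does not involve $\jump{u_\tau}^{n-1}$, the choice of test subspace is the key point.

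Concretely, I would take $\varphi_\tau \in \cP^r(J_n; X')$ with $\varphi_\tau^{n-1,+} = 0$; every such function factors uniquely as $\varphi_\tau(t) = (t - t_{n-1})\psi(t)$ with $\psi \in \cP^{r-1}(J_n; X')$. Substituting into \eqref{eq:DG-n}, the jump term disappears and we obtain
\[
    \int_{J_n} \dual*{\partial_t u_\tau}{(t - t_{n-1}) \psi(t)} dt = \int_{J_n} \dual*{f - A u_\tau}{(t - t_{n-1}) \psi(t)} dt
\]
for every $\psi \in \cP^{r-1}(J_n; X')$. Transforming to $(0,1)$ via $t = t_{n-1} + \tau_n s$ and setting $\hat u(s) = u_\tau(t_{n-1} + \tau_n s)$, $\hat h(s) = \tau_n (f - A u_\tau)(t_{n-1} + \tau_n s)$, this identity becomes
\[
    \int_0^1 \dual*{\partial_s \hat u}{s \hat\psi(s)} ds = \int_0^1 \dual*{\hat h(s)}{s \hat\psi(s)} ds, \qquad \forall \hat\psi \in \cP^{r-1}((0,1); X'),
\]
and both sides scale with the same power of $\tau_n$, so an estimate at the reference scale transfers cleanly.

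The one piece of real content is the non-degeneracy of the scalar bilinear form $(v, \psi) \mapsto \int_0^1 v(s)\psi(s) s\, ds$ on $\cP^{r-1}(0,1) \times \cP^{r-1}(0,1)$. If $v \in \cP^{r-1}(0,1)$ satisfies $\int_0^1 (sv) q\, ds = 0$ for all $q \in \cP^{r-1}(0,1)$, then $sv \in \cP^r(0,1)$ is $L^2$-orthogonal to $\cP^{r-1}(0,1)$, hence a scalar multiple of the shifted Legendre polynomial of degree $r$ on $[0,1]$; evaluating at $s = 0$ and using that this polynomial does not vanish at $0$ forces $v \equiv 0$. Once this non-degeneracy is in hand, the map $\hat h \mapsto \partial_s \hat u$ is a well-defined bounded projection $L^p((0,1); X) \to \cP^{r-1}((0,1); X)$ with norm depending only on $p$ and $r$ (the $X$-valued version follows by expansion in a scalar basis and inverting the associated $r \times r$ Gram matrix); rescaling to $J_n$ then gives the claim with $C$ independent of $n$ and $\tau$. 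The main obstacle is precisely this non-degeneracy check; the rest is routine scaling and finite-dimensional norm equivalence.
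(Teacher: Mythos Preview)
Your proposal is correct and follows essentially the same route as the paper: both kill the jump term by testing with $(t-t_{n-1})\psi$, $\psi\in\cP^{r-1}(J_n;X')$, and then invert the resulting weighted pairing on $\cP^{r-1}$. The paper just makes your Gram-matrix inversion explicit by choosing a basis $\{\hat\phi_j\}$ of $\cP^{r-1}(0,1)$ orthonormal for the weight $s\,ds$ (whose existence, incidentally, already gives the non-degeneracy you prove via Legendre polynomials---the weight $s$ is positive on $(0,1)$, so $\int_0^1 v^2 s\,ds=0$ forces $v=0$).
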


\begin{proof}
    Let $\{ \hat{\phi}_j \}_{j=0}^{r-1} \subset \cP^{r-1}$ an orthonormal system satisfying
    \begin{equation}
        \int_0^1 t \hat{\phi}_i(t) \hat{\phi}_j(t) dt = \delta_{i,j}
    \end{equation}
    and let
    \begin{equation}
        \phi_j^n(t) = \frac{1}{\tau_n} \hat{\phi}_j \paren*{\frac{t-t_{n-1}}{\tau_n}}.
    \end{equation}
    Then, the system $\{ \phi_j^n \}_j$ satisfies
    \begin{equation}
        \int_{J_n} (t-t_{n-1}) \phi_i^n(t) \phi_j^n(t) dt = \delta_{i,j}
    \end{equation}
    and
    \begin{equation}
        \norm{\phi_j^n}{L^q(J_n)} \le C \tau_n^{-1+\frac{1}{q}}, \qquad q \in [1,\infty]
    \end{equation}
    by scaling.
    Let us now express $\partial_t u_\tau|_{J_n} \in \cP^{r-1}(J_n;X)$ by
    \begin{equation}
        \partial_t u_\tau = \sum_{j=0}^{r-1} V_j \phi_j^n, \quad V_j \in X.
    \end{equation}
    Then we have
    \begin{equation}
        \Lpnorm{\partial_t u_\tau}{p}{J_n}{X}
        \le \sum_j \norm{V_j}{X} \norm{\phi_j^n}{L^p(J_n)}
        \le C \tau_n^{-1+\frac{1}{p}} \sum_j \norm{V_j}{X} .
        \label{eq:dudt-1}
    \end{equation}

    Moreover, fix any $V'_j \in X'$ satisfying
    \begin{equation}
        \dual*{V_j}{V'_j} = \norm{V_j}{X}, \qquad \norm{V'_j}{X'} = 1
    \end{equation}
    for $j=0,\dots,r$ and set 
    \begin{equation}
        \Psi =\sum_{j=0}^{r-1} V'_j \phi_j^n \in \cP^{r-1}(J_n;X').
    \end{equation}
    Then, substituting $\Phi \coloneqq (t-t_{n-1}) \Psi \in \cP^r(J_n;X')$ as a test function of  \eqref{eq:DG-n}, we have
    \begin{equation}
        \int_{J_n} \dual*{\partial_t u_\tau}{\Phi} dt = - \int_{J_n} \dual*{A u_\tau - f}{\Phi} dt 
        \le \Lpnorm{A u_\tau - f}{p}{J_n}{X} \Lpnorm{\Phi}{p'}{J_n}{X'}
    \end{equation}
    since $\Phi^{n-1}=0$.
    Therefore, noticing 
    \begin{equation}
        \int_{J_n} \dual*{\partial_t u_\tau}{\Phi} dt
        = \sum_{i,j} \int_{J_n} (t-t_{n-1}) \phi_i^n \phi_j^n dt \dual*{V_i}{V'_j}
        = \sum_j \norm{V_j}{X}
    \end{equation}
    and
    \begin{equation}
        \Lpnorm{\Phi}{p'}{J_n}{X'}
        \le \tau_n \sum_j \norm{V'_j}{X'} \norm{\phi_j^n}{L^{p'}(J_n)}
        \le C \tau_n^{1-\frac{1}{p}},
    \end{equation}
    we obtain
    \begin{equation}
        \sum_j \norm{V_j}{X}
        \le C \tau_n^{1-\frac{1}{p}} \Lpnorm{A u_\tau - f}{p}{J_n}{X}.
        \label{eq:dudt-2}
    \end{equation}
    Hence we establish the desired estimate from \eqref{eq:dudt-1} and \eqref{eq:dudt-2}.
\end{proof}

\begin{lemma}\label{lem:jump}
    Let $p \in [1,\infty]$ and let $u_\tau$ be the solution of \eqref{eq:DG}.
    Then, for any $n=1,\dots,N$, we have
    \begin{equation}
        \norm*{\frac{\jump{u_\tau}^{n-1}}{\tau_n}}{X} \tau_n^{1/p} \le \Lpnorm{\partial_t u_\tau + A u_\tau - f}{p}{J_n}{X}.
    \end{equation}
\end{lemma}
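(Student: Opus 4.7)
The plan is to exploit the Galerkin equation \eqref{eq:DG-n} with a constant test function. Since $r \ge 1$ (in fact $r \ge 0$ suffices here), every constant function $\varphi_\tau(t) \equiv \psi$ with $\psi \in X'$ belongs to $S^n_\tau(X')$, and for such a test function the right boundary value $\varphi_\tau^{n-1,+}$ is simply $\psi$. Substituting this into \eqref{eq:DG-n} immediately yields
\begin{equation}
    \dual*{\jump{u_\tau}^{n-1}}{\psi} = \int_{J_n} \dual*{f - \partial_t u_\tau - A u_\tau}{\psi}\, dt.
\end{equation}

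Next I would use a Hahn--Banach argument (or the duality characterization of the $X$-norm via its reflexivity, or equivalently by definition of the dual norm) to pick $\psi \in X'$ with $\norm{\psi}{X'}=1$ such that $\dual{\jump{u_\tau}^{n-1}}{\psi} = \norm{\jump{u_\tau}^{n-1}}{X}$. This reduces the task to bounding the right-hand side of the displayed identity.

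To finish, I would apply Hölder's inequality on $J_n$ with conjugate exponents $p$ and $p'$, using that $\psi$ is constant so $\norm{\psi}{L^{p'}(J_n;X')} = \tau_n^{1/p'}$ (with the usual conventions in the endpoint cases $p=1,\infty$). This gives
\begin{equation}
    \norm{\jump{u_\tau}^{n-1}}{X} \le \tau_n^{1/p'} \Lpnorm{\partial_t u_\tau + A u_\tau - f}{p}{J_n}{X},
\end{equation}
and since $1/p'=1-1/p$, dividing both sides by $\tau_n^{1-1/p}$ yields exactly the claimed estimate. There is no real obstacle here: the only thing to be careful about is the choice of a \emph{constant} test function (so that $\varphi_\tau^{n-1,+}=\psi$ plays the role of a linear functional acting on the jump), and the observation that reflexivity of $X$ (already assumed in the main setup) makes the norm-realizing functional available.
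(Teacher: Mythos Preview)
Your argument is correct and essentially identical to the paper's own proof: both substitute a constant test function $\varphi_\tau\equiv\psi\in\cP^0(J_n;X')$ chosen (via Hahn--Banach) to norm $\jump{u_\tau}^{n-1}$, then apply H\"older on $J_n$ with $\norm{\psi}{L^{p'}(J_n;X')}=\tau_n^{1/p'}$. One minor remark: reflexivity is not actually needed to obtain the norming functional---Hahn--Banach alone suffices.
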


\begin{proof}
    Let $\varphi \in X'$ be such that
    \begin{equation}
        \dual*{\jump{u_\tau}^{n-1}}{\varphi} = \norm*{\jump{u_\tau}^{n-1}}{X},
        \qquad 
        \norm{\varphi}{X'} = 1.
    \end{equation}
    Then, substituting $\varphi_\tau|_{J_n} \equiv \varphi \in \cP^0(J_n;X') \subset S_\tau^n(X')$ into \eqref{eq:DG-n}, we have
    \begin{align}
        \norm*{\jump{u_\tau}^{n-1}}{X}
        &= - \int_{J_n} \dual*{\partial_t u_\tau + A u_\tau - f}{\varphi} dt  \\ 
        &\le \Lpnorm{\partial_t u_\tau + A u_\tau - f}{p}{J_n}{X} \Lpnorm{\varphi}{p'}{J_n}{X'} \\ 
        &= \Lpnorm{\partial_t u_\tau + A u_\tau - f}{p}{J_n}{X} \tau_n^{1/p'} ,
    \end{align}
    which itself is the desired estimate.
\end{proof}

Thanks to \cref{lem:dudt,lem:jump}, it suffices to show 
\begin{equation}
    \Lpnorm{Au_\tau}{p}{J}{X} 
    \le C \paren*{ \Lpnorm{f}{p}{J}{X} + \norm{u_0}{\initialspace} }
    \label{eq:dmr-Au}
\end{equation}
for the proof of \eqref{eq:dmr}.
The subsequent sections are devoted to show this estimate.

\section{Temporally regularized Green's function}
\label{sec:green}

In this section, we introduce the temporally regularized Green's function, which plays an important role for the proof of \eqref{eq:dmr-Au}.

\subsection{Temporally regularized delta and Green's functions}

Hereafter, we fix an interval $J_\tiln \in \cJt$ and a time $\tilt \in J_\tiln$ arbitrarily.
Then, we can construct a regularized delta function $\regdelta \in C_0^\infty(J_\tiln)$ that satisfies
\begin{equation}
    \int_{J_\tiln} \regdelta(t) q(t) dt = q(\tilt), \qquad \forall q \in \cP^r(J_\tiln).
\end{equation}
The norms of $\regdelta$ have the upper bounds
\begin{equation}
    \norm{\partial_t^l \regdelta}{L^p(J_n)} \le C \tau^{-l-1+\frac{1}{p}},
    \label{eq:delta-norm}
\end{equation}
for any $l=0,1,\dots$ and $p \in [1,\infty]$, where $C$ is independent of $\tau$, $\tiln$, and $\tilt$. 
Construction of such $\regdelta$ can be found in \cite[p.~520]{MR1388486}. 

Fix $\varphi \in X'$ arbitrarily. Then, we define the temporally regularized Green's function $\reggamma$ as the solution of the dual problem
\begin{equation}
    \begin{cases}
        -\partial_t \reggamma(t) + A'\reggamma(t) = \regdelta(t) \varphi, & t \in J, \\ 
        \reggamma(T) = 0.
    \end{cases}
    \label{eq:cauchy-dual}
\end{equation}
Moreover, let $\reggammatau \in S_\tau(D(A'))$ be the DG approximation of $\reggamma$ satisfying
\begin{equation}
    \begin{dcases}
        B'_\tau(v_\tau,\reggammatau) = \int_J \dual*{v_\tau}{\regdelta\varphi} dt, & \forall v_\tau \in S_\tau(X), \\
        (\reggammatau)^{N,+} = 0.
    \end{dcases}
    \label{eq:DG-dual}
\end{equation}
Since $\reggamma$ satisfies compatibility property
\begin{equation}
    B'_\tau(v,\reggammatau) = \int_J \dual*{v}{\regdelta\varphi} dt,
    \qquad \forall v \in \brokenSobolev,
\end{equation}
we have the Galerkin orthogonality
\begin{equation}
    B'_\tau(v_\tau,\reggamma-\reggammatau) = 0, \qquad \forall v_\tau \in S_\tau(X).
\end{equation}

\subsection{Reduction to weighted estimates for regularized Green's function}

In this section, we address the estimate \eqref{eq:dmr-Au} with $u_0 = 0$ and show that the proof is reduced to a weighted error estimate for $\reggammatau$.
Hereafter, we define a weight function $\weight \in C^\infty(\bR)$ by
\begin{equation}
    \weight(t) \coloneqq \sqrt{(t-\tilt)^2 + \tau^2}, \qquad t \in \bR.
\end{equation}

\begin{lemma}\label{lem:dmr-to-weighted} 
    Let $X$ and $A$ be as in \cref{thm:dmr}. 
    Let $p \in (1,\infty)$ and let $u_\tau \in S_\tau(D(A))$ be the solution of \eqref{eq:DG} with $u_0=0$.
    Set 
    \begin{equation}
        M_{p',\alpha} \coloneqq \sup_{\tiln \in \bN,\tilt \in J_\tiln,\norm{\varphi}{X'}\le 1} \Lpnorm{\weight^\alpha A' (\reggamma - \reggammatau)}{p'}{J}{X'}
    \end{equation}
    for $\alpha > 0$.
    Then, provided that $\alpha > 1/p$, we have 
    \begin{equation}
        \Lpnorm{Au_\tau}{p}{J}{X}
        \le C \paren*{ 1 + M_{p',\alpha} \tau^{-\alpha+\frac{1}{p}} } \Lpnorm{f}{p}{J}{X},
    \end{equation}
    where $C$ is independent of $\tau$, $f$, $\tiln$, and $\tilt$.
\end{lemma}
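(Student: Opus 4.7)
The plan is to derive a pointwise identity for $\dual{Au_\tau(\tilt)}{\varphi}$ at arbitrary $\tilt \in J_\tiln$ and $\varphi \in X'$ with $\norm{\varphi}{X'}\le 1$, in which the main term is a clean pairing of $f$ with $A'\reggammatau$ and the remaining contributions can be controlled either by the weighted Galerkin-error quantity $M_{p',\alpha}$ or by inverse polynomial inequalities together with \cref{lem:dudt}. I would then integrate this pointwise bound over $\tilt$ and take the supremum over $\varphi$ to recover the $L^p$-norm of $Au_\tau$.

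To derive the identity, I would start from the reproducing property $\dual{Au_\tau(\tilt)}{\varphi} = \int_J \dual{Au_\tau}{\regdelta\varphi}\,dt$ (valid because $Au_\tau|_{J_\tiln} \in \cP^r(J_\tiln;X)$ and $\regdelta$ reproduces polynomials of degree $r$, which requires $r\ge 1$ in conjunction with the DG structure), substitute $\regdelta\varphi = -\partial_t\reggamma + A'\reggamma$, and integrate by parts on each $J_n$ — using the continuity of $\reggamma$, $\reggamma(T)=0$, and $\dual{A\partial_t u_\tau}{\reggamma} = \dual{\partial_t u_\tau}{A'\reggamma}$ — to obtain $\sum_n\int_{J_n}\dual{\partial_t u_\tau + Au_\tau}{A'\reggamma}\,dt$ plus boundary/jump terms paired with $\reggamma$. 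Splitting $A'\reggamma = A'\reggammatau + A'(\reggamma-\reggammatau)$ and invoking the DG equation $B_\tau(u_\tau, A'\reggammatau) = \int_J \dual{f}{A'\reggammatau}\,dt$ (valid since $A'\reggammatau \in S_\tau(X')$ and $u_0=0$), and matching the jumps of $\reggamma$ against those of $\reggammatau$ using the dual Galerkin orthogonality $B'_\tau(u_\tau, \reggamma - \reggammatau)=0$, I would arrive at the representation
\[
\dual{Au_\tau(\tilt)}{\varphi} = \int_J \dual{f}{A'\reggammatau}\,dt + \int_J \dual{\partial_t u_\tau + Au_\tau}{A'(\reggamma-\reggammatau)}\,dt + \mathcal{R}(\tilt,\varphi),
\]
where $\mathcal{R}$ gathers residual jump pairings to be absorbed.

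For the estimates: the main term is bounded by Hölder using a stability bound $\norm{A'\reggammatau}{L^{p'}(J;X')} \le C \norm{\regdelta\varphi}{L^{p'}(J;X')}$ (stemming from continuous dual maximal regularity and DG stability); the Galerkin-error term is bounded via the weighted Hölder inequality
\[
\Bigl|\int_J\dual{\partial_t u_\tau + Au_\tau}{A'(\reggamma-\reggammatau)}\,dt\Bigr| \le \norm{\weight^{-\alpha}(\partial_t u_\tau + Au_\tau)}{L^p(J;X)}\,M_{p',\alpha},
\]
and $\partial_t u_\tau$ is absorbed into $\norm{Au_\tau}{L^p}+\norm{f}{L^p}$ via \cref{lem:dudt}. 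To lift to $L^p$, I would write $\norm{Au_\tau}{L^p(J;X)}^p = \int_J \sup_{\norm{\varphi}{X'}\le 1}|\dual{Au_\tau(\tilt)}{\varphi}|^p\,d\tilt$ and use the weight-integral estimate $\int_J \weight_\tilt(s)^{-\alpha p}d\tilt \le C\tau^{1-\alpha p}$, valid precisely when $\alpha > 1/p$. This converts the $\weight^{-\alpha}$-weighted pointwise bound into an unweighted $L^p$ bound, and the factor $\tau^{(1-\alpha p)/p} = \tau^{1/p - \alpha}$ produced by this integration is exactly the prefactor $\tau^{-\alpha+1/p}$ that multiplies $M_{p',\alpha}$ in the claim.

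The hardest part will be the jump-term bookkeeping in the second step: after IBP and invocation of the DG equation, several families of interface terms appear, some paired with $\reggamma$ (not directly bounded by $M_{p',\alpha}$) and some with $\reggammatau$, and a delicate rearrangement — essentially matching the jump structure of the DG formulation $B_\tau$ against that of the dual $B'_\tau$ and using the orthogonality $B'_\tau(Au_\tau,\reggamma-\reggammatau)=0$ — is required so that everything beyond the main term either fits into the $A'(\reggamma-\reggammatau)$ category (bounded by $M_{p',\alpha}$) or can be dominated by $\norm{Au_\tau}{L^p}+\norm{f}{L^p}$ via the polynomial inverse inequality and \cref{lem:dudt,lem:jump}. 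The hypothesis $\alpha>1/p$ is forced by the convergence of the weight integral in the final step.
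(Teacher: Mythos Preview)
Your proposal has two genuine circularities that the paper's proof avoids.

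First, your bound on the ``main term'' $\int_J \dual{f}{A'\reggammatau}\,dt$ relies on a stability estimate $\norm{A'\reggammatau}{L^{p'}(J;X')}\le C\norm{\regdelta\varphi}{L^{p'}(J;X')}$. That estimate \emph{is} the discrete maximal regularity for the dual problem, which is equivalent to what the lemma is meant to help prove. Trying instead to bound $\norm{A'\reggamma}{L^{p'}}$ by continuous maximal regularity and then Hölder gives $|\int_J\dual{f}{A'\reggamma}\,dt|\le C\tau^{-1/p}\norm{f}{L^p(J)}$ with no localisation in $\tilt$; integrating the $p$-th power over $\tilt\in J$ then produces a factor $T/\tau$, not a constant.

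Second, your Galerkin-error term pairs $A'(\reggamma-\reggammatau)$ with $\partial_t u_\tau+Au_\tau$. After the weighted Hölder inequality and \cref{lem:dudt} this puts $C\,M_{p',\alpha}\tau^{-\alpha+1/p}\norm{Au_\tau}{L^p(J;X)}$ on the right-hand side. Since $M_{p',\alpha}\tau^{-\alpha+1/p}$ is not assumed small (the eventual Proposition~3.1 only shows it is bounded), this term cannot be absorbed into the left-hand side, and the argument does not close. The residual jump terms $\mathcal{R}$ have the same defect.

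The paper fixes both issues by a different splitting. After obtaining $\dual{Au_\tau(\tilt)}{\varphi}=\int_J\dual{f}{A'\reggammatau}\,dt$ (reached cleanly via $B'_\tau(Au_\tau,\reggammatau)=B_\tau(u_\tau,A'\reggammatau)$, with no integration by parts or jump bookkeeping), one writes
\[
\int_J\dual{f}{A'\reggammatau}\,dt=\int_J\dual{f}{A'\reggamma}\,dt-\int_J\dual{f}{A'(\reggamma-\reggammatau)}\,dt,
\]
and then converts the first integral, using the \emph{continuous} equations \eqref{eq:cauchy} and \eqref{eq:cauchy-dual}, into $\int_{J_\tiln}\dual{Au}{\regdelta\varphi}\,dt$. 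This is the missing idea: routing through the continuous solution $u$ exploits the continuous maximal regularity you assumed, and the support of $\regdelta$ in $J_\tiln$ gives the localisation needed so that the $\tilt$-integration yields exactly $\norm{Au}{L^p(J;X)}\le C\norm{f}{L^p(J;X)}$. The second integral pairs $f$ (not $\partial_t u_\tau+Au_\tau$) with the error, so the weighted Hölder step produces $M_{p',\alpha}\tau^{-\alpha+1/p}\norm{f}{L^p}$ with no $\norm{Au_\tau}$ on the right.
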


\begin{proof}
    We address $\dual*{Au_\tau(\tilt)}{\varphi} $ for $\tilt \in J_\tiln$ and $\varphi \in X'$.
    Recalling that $u_0=u_\tau^{0,-}=0$, we have
    \begin{align}
        \dual*{Au_\tau(\tilt)}{\varphi} 
        &= \int_{J_\tiln} \dual*{Au_\tau}{\regdelta\varphi} dt && \\
        &= \int_J \dual*{Au_\tau}{\regdelta\varphi} dt         && \\ 
        &= B'_\tau(Au_\tau,\reggammatau)                       && \text{by \eqref{eq:DG-dual}}\\
        &= B_\tau(u_\tau,A'\reggammatau)                       && \text{by \eqref{eq:duality}}\\
        &= \int_J \dual*{f}{A'\reggammatau} dt                 && \text{by \eqref{eq:DG}}\\
        &= \int_J \dual*{f}{A'\reggamma} dt - \int_J \dual*{f}{A'(\reggamma - \reggammatau)} dt .
    \end{align}
    Moreover, we have 
    \begin{align}
        \int_J \dual*{f}{A'\reggamma} dt
        &= \int_J \dual*{\partial_t u + A u}{A' \reggamma} dt         && \text{by \eqref{eq:cauchy}}\\ 
        &= \int_J \dual*{Au}{-\partial_t \reggamma + A' \reggamma} dt && \\ 
        &= \int_J \dual*{Au}{\regdelta\varphi} dt,                     && \text{by \eqref{eq:cauchy-dual}}
    \end{align}
    since $u(0) = \gamma(T) = 0$.
    We thus set 
    \begin{equation}
        I_1(\tilt;\varphi) = \int_J \dual*{Au}{\regdelta\varphi} dt,
        \qquad 
        I_2(\tilt;\varphi) = \int_J \dual*{f}{A'(\reggamma - \reggammatau)} dt 
    \end{equation}
    and then clearly
    \begin{equation}
        \dual*{Au_\tau(\tilt)}{\varphi} = I_1(\tilt;\varphi) - I_2(\tilt;\varphi) 
    \end{equation}
    holds, which implies
    \begin{multline}
        \Lpnorm{Au_\tau}{p}{J}{X} 
        \le \paren*{ \sum_{\tiln=1}^N \int_{J_\tiln} \sup_{\norm{\varphi}{X'} \le 1} |I_1(\tilt;\varphi)|^p d\tilt }^{1/p} \\
        +   \paren*{ \sum_{\tiln=1}^N \int_{J_\tiln} \sup_{\norm{\varphi}{X'} \le 1} |I_2(\tilt;\varphi)|^p d\tilt }^{1/p}.
    \end{multline}

    Let us address $I_1$. By \eqref{eq:delta-norm}, we have
    \begin{align}
        I_1(\tilt;\varphi) 
        &\le \Lpnorm{Au}{p}{J_\tiln}{X} \Lpnorm{\regdelta \varphi}{p'}{J_\tiln}{X'} \\
        &\le C \tau^{-\frac{1}{p}} \norm{\varphi}{X'} \Lpnorm{Au}{p}{J_\tiln}{X},
    \end{align}
    which implies 
    \begin{equation}
        \sum_{\tiln=1}^N \int_{J_\tiln} \sup_{\norm{\varphi}{X'} \le 1} |I_1(\tilt;\varphi)|^p d\tilt
        \le C \sum_{\tiln=1}^N \int_{J_\tiln} \tau^{-1} \Lpnorm{Au}{p}{J_\tiln}{X}^p d\tilt 
        \le C \Lpnorm{Au}{p}{J}{X}^p .
    \end{equation}
    Therefore, we have 
    \begin{equation}
        \paren*{ \sum_{\tiln=1}^N \int_{J_\tiln} \sup_{\norm{\varphi}{X'} \le 1} |I_1(\tilt;\varphi)|^p d\tilt }^{1/p}
        \le C \Lpnorm{f}{p}{J}{X}
    \end{equation}
    owing to the maximal regularity \eqref{eq:mr}.

    Let us then treat $I_2$. It is clear that
    \begin{equation}
        I_2(\tilt;\varphi) \le \Lpnorm{\weight^{-\alpha} f}{p}{J}{X} \Lpnorm{\weight^\alpha A'(\reggamma - \reggammatau)}{p'}{J}{X'} 
        \le M_{\alpha,p'} \Lpnorm{\weight^{-\alpha} f}{p}{J}{X},
    \end{equation}
    which implies
    \begin{equation}
        \sum_{\tiln=1}^N \int_{J_\tiln} \sup_{\norm{\varphi}{X'} \le 1} |I_2(\tilt;\varphi)|^p d\tilt
        \le M_{\alpha,p'}^p \sum_{\tiln=1}^N \int_{J_\tiln} \Lpnorm{\weight^{-\alpha} f}{p}{J}{X}^p d\tilt .
    \end{equation}
    Observe that
    \begin{align}
        \sum_{\tiln=1}^N \int_{J_\tiln} \Lpnorm{\weight^{-\alpha} f}{p}{J}{X}^p d\tilt
        &= \int_J \int_J \weight(t)^{-\alpha p} \norm{f(t)}{X}^p dt d\tilt \\ 
        &= \int_J \norm{f(t)}{X}^p \int_J \paren*{(t-\tilt)^2 + \tau^2}^{-\alpha p/2} d\tilt dt.
    \end{align}
    Now, by rescaling and $\alpha > 1/p \implies -\alpha p + 1<0$, one has
    \begin{equation}
        \int_J \paren*{(t-\tilt)^2 + \tau^2}^{-\alpha p/2} d\tilt
        \le 2\tau^{-\alpha p + 1} \int_0^\infty (s^2 + 1)^{-\alpha p/2} ds
        \le C \tau^{-\alpha p + 1}.
    \end{equation}
    Therefore, we have
    \begin{equation}
        \sum_{\tiln=1}^N \int_{J_\tiln} \Lpnorm{\weight^{-\alpha} f}{p}{J}{X}^p d\tilt
        \le C \tau^{-\alpha p+ 1} \Lpnorm{f}{p}{J}{X}^p
    \end{equation}
    and thus obtain 
    \begin{equation}
        \paren*{ \sum_{\tiln=1}^N \int_{J_\tiln} \sup_{\norm{\varphi}{X'} \le 1} |I_2(\tilt;\varphi)|^p d\tilt }^{1/p}
        \le C M_{\alpha,p'} \tau^{-\alpha + \frac{1}{p}} \Lpnorm{f}{p}{J}{X}.
    \end{equation}
    Hence we complete the proof owing to the reflexivity of $X$. 
\end{proof}

By the change of variable $t \mapsto T-t$ and by replacing $(X',A',p')$ by $(X,A,p)$, it suffices to show the following assertion for the proof of \eqref{eq:dmr-Au} with $u_0 = 0$.

\begin{proposition}\label{prop:weighted-error-X}
    Let $p \in (1,\infty)$.
    Fix $v \in X$, $\tiln \in \bN$, and $\tilt \in J_\tiln$ arbitrarily.
    Define $\regg \in W^{1,p}(J;X) \cap L^p(J; D(A))$ and $\reggtau \in S_\tau(D(A))$ by
    \begin{equation}
        \begin{cases}
            \partial_t \regg(t) + A\regg(t) = \regdelta(t) v, & t \in J, \\ 
            \regg(0) = 0,
        \end{cases}
    \end{equation}
    and 
    \begin{equation}
        \begin{dcases}
            B_\tau(\reggtau,\varphi_\tau) = \int_J \dual*{\regdelta v}{\varphi_\tau} dt, & \forall \varphi_\tau \in S_\tau(X'), \\
            (\reggtau)^{0,-} = 0,
        \end{dcases}
    \end{equation}
    respectively.
    Then, there exists $\alpha > 1/p'$ that satisfies
    \begin{equation}
        \Lpnorm{\weight^\alpha A(\regg - \reggtau)}{p}{J}{X} \le C \tau^{\alpha-\frac{1}{p'}} \norm{v}{X}.
    \end{equation}      
\end{proposition}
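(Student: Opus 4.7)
The plan is to decompose the error as $\regg - \reggtau = \eta + z$, with $\eta \coloneqq \regg - I_\tau\regg$ the interpolation error and $z \coloneqq I_\tau\regg - \reggtau \in S_\tau(D(A))$ the discrete error, and to bound $\Lpnorm{\weight^\alpha A\eta}{p}{J}{X}$ and $\Lpnorm{\weight^\alpha A z}{p}{J}{X}$ separately for some $\alpha$ slightly larger than $1/p'$. A direct computation (integrating $\int_{J_n}\dual{\partial_t\eta}{\varphi_\tau}dt$ by parts and using $\eta^{n,-}=0$ together with the orthogonality $\int_{J_n}\dual{\eta}{\partial_t\varphi_\tau}dt = 0$, after which the jump and initial contributions telescope) shows $B_\tau(\eta,\varphi_\tau) = \int_J \dual{A\eta}{\varphi_\tau}dt$ for all $\varphi_\tau \in S_\tau(X')$. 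Combined with the Galerkin orthogonality $B_\tau(\regg - \reggtau,\varphi_\tau)=0$ and $z^{0,-}=0$, this identifies $z$ as the DG solution to a Cauchy problem with forcing $-A\eta$ and zero initial datum, so \cref{cor:Duhamel-DG} yields an explicit expression for the coefficients of $z$ in terms of the rational operators $R_{i,j}(\tau_m A)$ acting on the moments $\int_{J_m}A\eta\,\phi^m_j\,dt$.

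For the interpolation error I would exploit analytic smoothing of the semigroup away from $J_{\tiln}$. Since $\regdelta$ is supported in $J_{\tiln}$, the variation-of-constants formula gives $\regg(t) = \int_{J_{\tiln}}e^{-(t-s)A}\regdelta(s)v\,ds$ for $t > t_{\tiln}$, so for $t \in J_n$ with $n \ne \tiln$ the bound $\norm{A^{r+2}e^{-\sigma A}w}{X} \le C\sigma^{-(r+2)}\norm{w}{X}$ combined with $\norm{\regdelta}{L^1(J_{\tiln})} \le C$ yields
\begin{equation}
    \norm{\partial_t^{r+1} A\regg(t)}{X} = \norm{A^{r+2}\regg(t)}{X} \le \frac{C\norm{v}{X}}{\abs{t-\tilt}^{r+2}}.
\end{equation}
Inserting this into \eqref{eq:int-err} with $l=0$, $m=r$ and multiplying by $\weight^\alpha \sim \max\{\tau,\abs{t_n-\tilt}\}^\alpha$, the contributions from the intervals $J_n \ne J_{\tiln}$ sum to a convergent geometric series in $|n-\tiln|$ as soon as $\alpha > 1/p'$. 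On $J_{\tiln}$ itself, the continuous maximal regularity \eqref{eq:mr} combined with \eqref{eq:delta-norm} and the $L^p$-stability of $I_\tau$ gives the crude estimate $\Lpnorm{A\eta}{p}{J_{\tiln}}{X} \le C\tau^{-1/p'}\norm{v}{X}$, and multiplication by $\tau^\alpha$ then provides the correct scaling. Altogether $\Lpnorm{\weight^\alpha A\eta}{p}{J}{X} \le C\tau^{\alpha-1/p'}\norm{v}{X}$.

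For the discrete error, the uniform bound \eqref{eq:rational-DG} ensures each $R_{i,j}(\tau_m A)$ is stable on $X$, and a contour integration of the Pad\'e-type decay of $R_{r,0}$ along $\Gamma_\delta$ supplies the discrete parabolic smoothing
\begin{equation}
    \norm*{A\prod_{l=m+1}^{n-1}R_{r,0}(\tau_l A)w}{X} \le \frac{C\norm{w}{X}}{\tau+(t_n-t_m)}, \qquad w \in X.
\end{equation}
Bounding $\norm*{\int_{J_m}A\eta\,\phi^m_j dt}{X} \le C\tau_m^{1/p'}\Lpnorm{A\eta}{p}{J_m}{X}$ via H\"older's inequality and plugging everything into the Duhamel formula of \cref{cor:Duhamel-DG} reduces the estimate of $\norm{Az^n_i}{X}$ to a discrete convolution of the sequence $\Lpnorm{A\eta}{p}{J_m}{X}$ against the smoothing kernel $(\tau+|t_n-t_m|)^{-1}$; a Young-type inequality in $\ell^p$ then transfers the weighted estimate of $A\eta$ to $Az$.

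The main obstacle will be performing this weighted discrete convolution rigorously, in particular dominating the weight ratio $\weight(t_n)^\alpha/\weight(t_m)^\alpha$ either by the smoothing kernel (when $t_m$ is near $\tilt$ but $t_n$ is far) or by the decay already present in the input sequence (when both are far). This is what forces $\alpha$ to lie strictly above $1/p'$, so that the localized kernel $\tau/(\tau + |t - \tilt|)$ is $\ell^{p'}$-summable, and imposes an upper bound on $\alpha$ that nonetheless leaves the admissible window nonempty under the assumption $r \ge 1$, consistent with \cref{rem:r}.
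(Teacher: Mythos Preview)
Your overall architecture matches the paper's: the same splitting $\regg-\reggtau=\eta+z$, the same identification of $z$ as a DG solution driven by $-A\eta$, and the same Duhamel representation via \cref{cor:Duhamel-DG}. The weighted bound for $A\eta$ is also essentially the paper's \cref{lem:weighted-int-err} (you use the full interpolation order $m=r$ where the paper only uses $m=1$, but either works).

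The gap is in your treatment of $Az$. Your smoothing estimate
\[
\norm*{A\prod_{l=m+1}^{n-1}R_{r,0}(\tau_l A)w}{X}\le \frac{C}{\tau+(t_n-t_m)}\norm{w}{X}
\]
is one order too weak for the weighted convolution you need. With this kernel and your input $\norm{A\eta}{L^1(J_m)}$, the contribution from the few intervals $J_m$ adjacent to $J_{\tiln}$ (where $\norm{A\eta}{L^1(J_m)}\sim\norm{v}{X}$, with no extra $\tau$) already produces
\[
\norm{Az}{L^\infty(J_n)}\gtrsim (\tau+\abs{t_n-\tilt})^{-1}\norm{v}{X},
\]
and hence $\norm{\weight^\alpha Az}{L^p(J)}^p\gtrsim\sum_n\tau\,\weight(t_n)^{(\alpha-1)p}$, which diverges precisely when $\alpha\ge 1/p'$. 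Equivalently, the weight $\weight^{\alpha p}$ lies \emph{outside} the Muckenhoupt class $A_p$ for $\alpha>1/p'$, so no Young/Schur argument with a $\abs{t}^{-1}$-type kernel can close. Your final paragraph correctly identifies this as the crux, but the remedy you sketch (``$\tau/(\tau+\abs{t-\tilt})$ is $\ell^{p'}$-summable'') does not resolve it.

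The paper's fix (\cref{lem:A-res-L^infty}) is to move the second copy of $A$ onto the operator side: write the far part of the Duhamel sum as $A^2R_{i,0}(\tau_nA)\bigl(\prod R_{r,0}\bigr)R_{r,j}(\tau_mA)\int_{J_m}\eta\,\phi^m_j\,dt$, acting on $\eta$ rather than $A\eta$. A contour argument together with the cubic lower bound $\prod_{l=m+1}^{n}(1+C\tau_l x)\ge 1+C(t_n-t_m)^3x^3$ (valid because at least three factors are present when $n-m\ge 3$, using quasi-uniformity) yields the sharper smoothing
\[
\norm*{A^2R_{i,0}(\tau_nA)\prod_{l=m+1}^{n-1}R_{r,0}(\tau_lA)R_{r,j}(\tau_mA)}{\cL(X)}\le C(t_n-t_m)^{-2}.
\]
The input is now $\norm{\eta}{L^1(J_m)}$, which by \cref{lem:int-err-loc} carries an extra factor of~$\tau$ near $J_{\tiln}$ and decays like $\tau^{3}(t_{m-1}-t_{\tiln})^{-2}$ for $m\ge\tiln+2$ (this is where $r\ge 1$ enters). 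The resulting convolution of $(t_n-t_m)^{-2}$ against $(t_{m-1}-t_{\tiln})^{-2}$ is genuinely summable and gives $\norm{Az}{L^\infty(J_n)}\le C\tau(t_{n-2}-t_{\tiln})^{-2}\norm{v}{X}$, from which the weighted $L^p$ bound follows for any $\alpha\in(1/p',\,1+1/p')$.
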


\subsection{Weighted interpolation error estimate}

Let $\err=\regg-\reggtau$, $\interr=\regg-I_\tau \regg$, and $\res = \err-\interr = I_\tau \regg - \reggtau \in S_\tau(D(A))$,
where $\regg$ and $\reggtau$ are as in \cref{prop:weighted-error-X}.
In this section, we show the following interpolation error estimate.

\begin{lemma}\label{lem:weighted-int-err}
    Let $v \in X$, $p \in (1,\infty)$, and $\alpha < 1 + \frac{1}{p'}$.
    Then, we have
    \begin{equation}
        \Lpnorm{\weight^\alpha A\interr}{p}{J}{X} 
        \le C \tau^{\alpha-\frac{1}{p'}} \norm{v}{X},
        \label{eq:weighted-int-err}
    \end{equation}
    where $C$ is independent of $\tau$, $f$, $\tiln$, $\tilt$, and $v$.
\end{lemma}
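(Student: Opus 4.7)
Since the interpolation operator $I_\tau$ acts only on the time variable, it commutes with $A$, so $A\interr = A\regg - I_\tau(A\regg)$ and the task is to bound the weighted interpolation error of $w := A\regg$. The plan is to split $J = \bigcup_{n} J_n$ and estimate the contribution of each subinterval separately, grouping them by their position relative to the ``singular'' interval $J_\tiln$ where $\regdelta$ is supported.

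For $n < \tiln$ the support condition on $\regdelta$ combined with $\regg(0)=0$ and causality of the forward evolution forces $\regg \equiv 0$ on $J_n$, so $I_\tau^n \regg = 0$ and there is no contribution. On the singular interval $J_\tiln$ itself the weight is trapped in the range $[\tau, C\tau]$, so $\weight^\alpha \le C\tau^\alpha$ and it suffices to show $\Lpnorm{A\regg - I_\tau^\tiln(A\regg)}{p}{J_\tiln}{X} \le C\tau^{-1/p'}\norm{v}{X}$. I would obtain this from continuous maximal regularity applied to \eqref{eq:cauchy-dual}'s forward counterpart, which together with \eqref{eq:delta-norm} gives $\Lpnorm{A\regg}{p}{J}{X} \le C\Lpnorm{\regdelta v}{p}{J}{X} \le C\tau^{-1/p'}\norm{v}{X}$, plus a stability bound for $I_\tau^\tiln A\regg$ in $L^p(J_\tiln;X)$. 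The latter is obtained by using the equation $A\regg = \regdelta v - \partial_t \regg$: the $\regdelta v$ piece is smooth and is handled by \eqref{eq:int-err} with $m=r$ and the high-order derivative bounds \eqref{eq:delta-norm}, while the $\partial_t \regg$ piece is controlled by continuous maximal regularity combined with the $W^{1,p}$-boundedness of $I_\tau^\tiln$.

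For downstream intervals $n > \tiln$ the source vanishes, so $\regg(t) = e^{-(t-t_\tiln)A}\regg(t_\tiln)$. A key preliminary step is to establish that $\regg(t_\tiln) \in D(A^k)$ with $\norm{A^k\regg(t_\tiln)}{X} \le C\tau^{-k}\norm{v}{X}$ for every $k\ge 0$; this follows from $k$ iterated integrations by parts in
\begin{equation}
\regg(t_\tiln) = \int_{J_\tiln} e^{-(t_\tiln-s)A} \regdelta(s) v\,ds,
\end{equation}
all boundary terms vanishing because $\regdelta \in C_0^\infty(J_\tiln)$, together with \eqref{eq:delta-norm} applied to $\regdelta^{(k)}$. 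Combining this with the analytic semigroup estimate $\norm{A^k e^{-sA}}{\mathcal{L}(X)} \le C s^{-k}$ yields the pointwise bound $\norm{A^{r+2}\regg(t)}{X} \le C\norm{v}{X} \max(\tau, t-t_\tiln)^{-(r+2)}$ on $J_n$. Applying \eqref{eq:int-err} with $m=r$, $l=0$, observing that $\partial_t^{r+1}(A\regg) = (-1)^{r+1} A^{r+2}\regg$ on $J_n$, and noting that $\weight \sim (n-\tiln)\tau$ there by quasi-uniformity, the $n$-th contribution to the $p$-th power of the weighted norm is bounded by $C\tau^{\alpha p - p/p'}(n-\tiln)^{(\alpha - r - 2)p}\norm{v}{X}^p$. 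Summing over $n>\tiln$ converges whenever $(\alpha - r - 2)p < -1$, which is comfortably implied by the hypothesis $\alpha < 1 + 1/p'$ and $r\ge 1$.

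The main obstacle is the singular interval (regime b): because $I_\tau^\tiln$ involves pointwise evaluation it is not obviously stable on $L^p(J_\tiln;X)$, and the source $\regdelta v$ means the naive Bramble-Hilbert argument on $A\regg$ needs a time-derivative estimate that is unavailable for generic $v\in X$. Re-expressing $A\regg$ through the equation and splitting into a smooth source term and a time-derivative term is what unlocks this. A secondary subtlety is the endpoint behavior in (c), for which the smooth cutoff $\regdelta \in C_0^\infty(J_\tiln)$ is essential: it is precisely what allows the integration-by-parts argument to gain arbitrary regularity of $\regg(t_\tiln)$ at the quantitative cost $\tau^{-k}$.
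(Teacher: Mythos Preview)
Your argument is essentially correct but takes a more elaborate route than the paper, and there is one step on the singular interval that you gloss over.

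\textbf{Comparison.} The paper's proof is simpler because it uses the interpolation estimate \eqref{eq:int-err} with $m=1$ uniformly, never $m=r$. For \emph{every} $n\ge\tiln$ it writes
\[
\Lpnorm{A\interr}{p}{J_n}{X}\le C\tau\bigl(\Lpnorm{\partial_t^2\regg}{p}{J_n}{X}+\Lpnorm{\partial_t A\regg}{p}{J_n}{X}\bigr),
\]
and then bounds the right-hand side in two regimes: for $n\in\{\tiln,\tiln+1\}$ it applies maximal regularity to the \emph{differentiated} equation $\partial_t(\partial_t\regg)+A(\partial_t\regg)=\partial_t\regdelta\,v$ (with zero initial value), giving $\Lpnorm{\partial_t^2\regg}{p}{J}{X}+\Lpnorm{\partial_t A\regg}{p}{J}{X}\le C\tau^{-1-1/p'}\norm{v}{X}$; for $n\ge\tiln+2$ it uses the Duhamel formula and the semigroup bound $\norm{A^2e^{-sA}}{\cL(X)}\le Cs^{-2}$ directly. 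No splitting $A\regg=\regdelta v-\partial_t\regg$, no high-order regularity of $\regg(t_\tiln)$, no separate stability argument for $I_\tau^\tiln$. Your integration-by-parts trick for $\norm{A^k\regg(t_\tiln)}{X}\le C\tau^{-k}\norm{v}{X}$ is correct and elegant, but overkill here.

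\textbf{The gap.} On $J_\tiln$ you want to control $I_\tau^\tiln(\partial_t\regg)$ via ``continuous maximal regularity combined with the $W^{1,p}$-boundedness of $I_\tau^\tiln$''. But $W^{1,p}$-boundedness applied to $\partial_t\regg$ requires $\partial_t^2\regg\in L^p(J_\tiln;X)$, and maximal regularity for $\regg$ itself yields only $\partial_t\regg\in L^p$. You need the extra step of differentiating the equation and applying maximal regularity to $\partial_t\regg$---which is precisely the paper's move. Once you make that explicit, your singular-interval bound goes through; but note that at that point you already have $\Lpnorm{\partial_t A\regg}{p}{J}{X}\le C\tau^{-1-1/p'}\norm{v}{X}$, so the direct application of \eqref{eq:int-err} with $m=1$ gives the result in one line and your splitting becomes unnecessary.
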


We first show the local interpolation error estimate.
Recall that $\regg = \reggtau \equiv 0$ in $J_n$ if $n \le \tiln-1$.

\begin{lemma}\label{lem:int-err-loc}
    Let $v \in X$, $p \in [1,\infty)$, and $n \ge \tiln$. 
    Then, 
    \begin{multline}
        \tau^{-1} \Lpnorm{\interr}{p}{J_n}{X} + \Lpnorm{\partial_t \interr}{p}{J_n}{X} + \Lpnorm{A \interr}{p}{J_n}{X} \\ 
        \le \begin{cases}
            C \tau^{-1+\frac{1}{p}} \norm{v}{X}, & n \le \tiln + 1, \\
            C \tau^{1+\frac{1}{p}} (t_{n-1} - t_\tiln)^{-2} \norm{v}{X}, & n \ge \tiln + 2,
        \end{cases}
    \end{multline}
    where $C$ is independent of $\tau$, $\tiln$, $\tilt$, $n$, and $v$.
\end{lemma}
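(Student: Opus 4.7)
The plan is to handle the two ranges of $n$ by quite different methods, because the structural reason for the bound is different in each case.

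\textbf{Far regime ($n \ge \tiln + 2$).} On $J_n$ the source vanishes, so $\partial_t \regg + A\regg = 0$ pointwise and $\regg$ admits the representation $\regg(t) = \int_{J_\tiln} e^{-(t-s)A} \regdelta(s) v \, ds$. The analytic-semigroup bound $\norm{A^k e^{-\sigma A}}{X \to X} \le C \sigma^{-k}$ combined with $\norm{\regdelta}{L^1(J_\tiln)} \le C$ (the $l=0$, $p=1$ case of \eqref{eq:delta-norm}) yields the pointwise bounds $\norm{A^k \regg(t)}{X} \le C(t-t_\tiln)^{-k}\norm{v}{X}$ for $t \in J_n$ and $k \ge 0$, and since $\partial_t \regg = -A\regg$ on $J_n$, the same bound holds for $\partial_t^k \regg$. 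I would then apply \eqref{eq:int-err} with $m = 1$: directly to $\regg$ to bound $\norm{\interr}{L^p(J_n;X)}$ and $\norm{\partial_t \interr}{L^p(J_n;X)}$, and to $A\regg$ for $\norm{A\interr}{L^p(J_n;X)}$, using the commutation $A I_\tau\regg = I_\tau (A\regg)$ (valid by closedness of $A$, since $\regg(t_n) \in D(A)$ by semigroup smoothing when $n > \tiln$). Inserting the pointwise bounds yields $\norm{A\interr}{L^p(J_n;X)} \le C\tau^{2+1/p}(t_{n-1}-t_\tiln)^{-3}\norm{v}{X}$ and analogous bounds for the other two terms; quasi-uniformity gives $\tau \le C(t_{n-1}-t_\tiln)$ for $n \ge \tiln + 2$, which trades one factor $(t_{n-1}-t_\tiln)^{-1}$ for $\tau^{-1}$ and unifies everything to the desired rate.

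\textbf{Near regime ($n \in \{\tiln, \tiln+1\}$).} Here semigroup smoothing is unavailable on $J_\tiln$, and I would invoke instead global maximal regularity of $A$ for the Cauchy problem defining $\regg$:
\begin{equation*}
    \Lpnorm{\partial_t \regg}{p}{J}{X} + \Lpnorm{A\regg}{p}{J}{X}
    \le C \Lpnorm{\regdelta v}{p}{J}{X}
    \le C \tau^{-1 + 1/p} \norm{v}{X},
\end{equation*}
the last step by \eqref{eq:delta-norm}. Restricting to $J_n$ and applying \eqref{eq:int-err} with $m=0$ yields $\Lpnorm{\interr}{p}{J_n}{X} \le C\tau^{1/p}\norm{v}{X}$, giving the desired $\tau^{-1}\Lpnorm{\interr}{p}{J_n}{X}$ bound. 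For the time derivative I would use the triangle inequality together with the polynomial inverse estimate $\Lpnorm{\partial_t I_\tau \regg}{p}{J_n}{} \le C\tau^{-1}\Lpnorm{I_\tau \regg}{p}{J_n}{}$ and the a priori bound $\Lpnorm{\regg}{\infty}{J}{X} \le C\norm{v}{X}$ (from the semigroup formula and $\norm{\regdelta}{L^1} \le C$). For the $A$-term I would write $\Lpnorm{A\interr}{p}{J_n}{X} \le \Lpnorm{A\regg}{p}{J_n}{X} + \Lpnorm{A I_\tau \regg}{p}{J_n}{X}$, control the first summand by the above maximal-regularity bound, and treat the second via $A I_\tau \regg = I_\tau(A\regg)$ together with the local $L^p$-stability estimate $\Lpnorm{I_\tau^n w}{p}{J_n}{X} \le C\Lpnorm{w}{p}{J_n}{X} + C\tau^{1/p}\norm{w(t_n)}{X}$, obtained by scaling from norm equivalence on $\cP^r$ on the reference interval.

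\textbf{Expected main obstacle.} The delicate point is bounding the nodal value $\norm{A\regg(t_n)}{X}$ for $n \in \{\tiln, \tiln+1\}$: a naive use of $W^{1,p}$-stability of $I_\tau^n$ applied to $A\regg$ would introduce $\Lpnorm{\partial_t A\regg}{p}{J_\tiln}{X}$, which is \emph{not} finite when $v \notin D(A)$ (formally $\partial_t A\regg = \regdelta\, Av - A^2\regg$ and the first term is undefined). The way out is that $\regdelta \in C_0^\infty(J_\tiln)$ vanishes in an $O(\tau)$ neighborhood of $\partial J_\tiln$, so in the representation of $\regg(t_n)$ the singularity of $A e^{-\sigma A}$ at $\sigma = 0$ is effectively cut off; combined with $\norm{\regdelta}{L^\infty} \le C\tau^{-1}$, this yields $\norm{A\regg(t_n)}{X} \le C\tau^{-1}\norm{v}{X}$ for $n \in \{\tiln, \tiln+1\}$, which is exactly what is needed to close the estimate at the rate $C\tau^{-1+1/p}\norm{v}{X}$ and simultaneously justify the commutation $A I_\tau \regg = I_\tau(A\regg)$ on $J_\tiln$.
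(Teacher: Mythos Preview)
Your proposal is correct, and the far-regime argument is essentially the paper's. For the near regime, however, the paper takes a cleaner and more uniform route: it differentiates the equation to see that $\partial_t\regg$ solves the Cauchy problem with source $\partial_t\regdelta\,v$ and zero initial data, applies maximal regularity to obtain
\[
\Lpnorm{\partial_t^2\regg}{p}{J}{X}+\Lpnorm{\partial_t A\regg}{p}{J}{X}\le C\Lpnorm{\partial_t\regdelta\,v}{p}{J}{X}\le C\tau^{-2+1/p}\norm{v}{X},
\]
and then uses the interpolation estimate \eqref{eq:int-err} with $m=1$ for all three terms simultaneously (this is where $r\ge1$ enters). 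This avoids entirely the $L^p$-stability of $I_\tau^n$, the inverse inequality, and the nodal value $A\regg(t_n)$ that you have to work around. Your approach trades one application of maximal regularity at the level of $\partial_t\regg$ for several separate stability and pointwise arguments; both reach the same bound, but the paper's is shorter and makes the role of $r\ge1$ transparent.

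One point to tighten: your bound $\norm{A\regg(t_n)}{X}\le C\tau^{-1}\norm{v}{X}$ is justified by the claim that $\regdelta$ vanishes in an $O(\tau)$-neighbourhood of $\partial J_\tiln$, but the paper only states $\regdelta\in C_0^\infty(J_\tiln)$ together with the norm bounds \eqref{eq:delta-norm}, which does not guarantee a gap of size comparable to $\tau$. The estimate is nonetheless true from the stated properties alone: integrate by parts in $s$ to get
\[
A\regg(t_n)=-\int_{J_\tiln}e^{-(t_n-s)A}\,\partial_s\regdelta(s)\,v\,ds
\]
(the boundary terms vanish because $\regdelta\in C_0^\infty(J_\tiln)$), and then use $\norm{e^{-\sigma A}}{\cL(X)}\le C$ together with $\norm{\partial_t\regdelta}{L^1(J_\tiln)}\le C\tau^{-1}$ from \eqref{eq:delta-norm}.
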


\begin{proof}
    We may assume $p>1$. The estimates for $p=1$ are obtained by the H\"older inequality.

    (i) Let $\tiln \le n \le \tiln+1$. Then, we have
    \begin{multline}
        \tau^{-1} \Lpnorm{\interr}{p}{J_n}{X} 
        + \Lpnorm{\partial_t \interr}{p}{J_n}{X} 
        + \Lpnorm{A \interr}{p}{J_n}{X} \\ 
        \le C \tau \paren*{ \Lpnorm{\partial_t^2 \regg}{p}{J}{X} 
        + \Lpnorm{\partial_t A \regg}{p}{J}{X}  } 
    \end{multline}
    by \eqref{eq:int-err}. 
    Here, the assumption $r \ge 1$ is used for the estimate of $\interr$.
    Notice that $\partial_t \regg$ satisfies the parabolic equation
    \begin{equation}
        \begin{cases}
            \partial_t (\partial_t \regg) (t) + A (\partial_t \regg) (t) = \partial_t \regdelta(t) v, & t \in J, \\ 
            (\partial_t \regg) (0) = 0,
        \end{cases}
    \end{equation}
    since $\regdelta \equiv 0$ in the neighborhood of $t=0$ and so is $\regg$ by the uniqueness. 
    Therefore, by the maximal regularity for this equation and \eqref{eq:delta-norm}, we have
    \begin{equation}
        \Lpnorm{\partial_t^2 \regg}{p}{J}{X} 
            + \Lpnorm{\partial_t A \regg}{p}{J}{X}
            \le C \Lpnorm{v \partial_t \regdelta}{p}{J}{X}
            \le C \tau^{-1-\frac{1}{p'}} \norm{v}{X},
    \end{equation}
    which implies
    \begin{equation}
        \tau^{-1} \Lpnorm{\interr}{p}{J_n}{X} 
        + \Lpnorm{\partial_t \interr}{p}{J_n}{X} 
        + \Lpnorm{A \interr}{p}{J_n}{X} 
        \le C \tau^{-1+\frac{1}{p}} \norm{v}{X},
    \end{equation}
    and we obtain the desired estimate.

    (ii) Let $n \ge \tiln+2$.
    Again by \eqref{eq:int-err}, we have
    \begin{multline}
        \tau^{-1} \Lpnorm{\interr}{p}{J_n}{X} 
        + \Lpnorm{\partial_t \interr}{p}{J_n}{X} 
        + \Lpnorm{A \interr}{p}{J_n}{X} \\
        \begin{aligned}
            &\le C \tau \paren*{ \Lpnorm{\partial_t^2 \regg}{p}{J_n}{X} 
            + \Lpnorm{\partial_t A \regg}{p}{J_n}{X}  } \\ 
            &\le C \tau^{1+\frac{1}{p}} \paren*{ \Lpnorm{\partial_t^2 \regg}{\infty}{J_n}{X} 
            + \Lpnorm{\partial_t A \regg}{\infty}{J_n}{X}  }.
        \end{aligned}
    \end{multline}
    Now, owing to $t \in J_n$ and $n \ge \tiln + 2$ (i.e., $t \ge t_\tiln$), we have
    \begin{equation}
        \partial_t^2 \regg(t) = -\partial_t A \regg(t) 
        = \int_{t_{\tiln-1}}^{t_\tiln} A^2 e^{-(t-s)A} v \regdelta(s) ds
    \end{equation}
    by the Duhamel formula.
    Therefore, we have 
    \begin{equation}
        \norm{\partial_t^2 \regg(t)}{X} = \norm{\partial_t A \regg(t)}{X} 
        \le C \int_{t_{\tiln-1}}^{t_\tiln} (s-t)^{-2} \norm{v}{X} \regdelta(s) ds
        \le C (t-t_\tiln)^{-2} \norm{v}{X},
    \end{equation}
    which implies
    \begin{equation}
        \Lpnorm{\partial_t^2 \regg}{\infty}{J_n}{X} 
        = \Lpnorm{\partial_t A \regg}{\infty}{J_n}{X} 
        \le C (t_{n-1} - t_\tiln)^{-2} \norm{v}{X}.
    \end{equation}
    Summarizing the above estimates, we have the desired estimate
    \begin{equation}
        \tau^{-1} \Lpnorm{\interr}{p}{J_n}{X} 
        + \Lpnorm{\partial_t \interr}{p}{J_n}{X} 
        + \Lpnorm{A \interr}{p}{J_n}{X} 
        \le C \tau^{1+\frac{1}{p}} (t_{n-1} - t_\tiln)^{-2} \norm{v}{X},
    \end{equation}
    and hence we complete the proof.
\end{proof}

\begin{proof}[Proof of \cref{lem:weighted-int-err}]
    Notice that the weight function $\weight$ satisfies
    \begin{equation}
        \weight|_{J_n} \le 
        \begin{cases}
            C \tau , & \tiln \le n \le \tiln + 1, \\ 
            C (t_{n-1} - t_\tiln), & n \ge \tiln + 2.
        \end{cases}
        \label{eq:weight-Jn}
    \end{equation}
    Indeed, this is clear when $\tiln \le n \le \tiln+1$, and when $n \ge \tiln + 2$,
    we have, for $t \in J_n$,
    \begin{align}
        \weight(t) 
        &\le C \sqrt{(t - t_{n-1})^2 + (t_{n-1} - t_\tiln)^2 + (t_\tiln-\tilt)^2 + \tau^2} \\ 
        &\le C \sqrt{(t_{n-1} - t_\tiln)^2 + 3\tau^2} \\ 
        &\le C (t_{n-1} - t_\tiln)
    \end{align}
    since $\tau \le C \tau_{n-1} \le C(t_{n-1} - t_\tiln)$.
    Therefore, together with \cref{lem:int-err-loc}, we have
    \begin{equation}
        \Lpnorm{\weight^\alpha A \interr}{p}{J_n}{X} \le 
        \begin{cases}
            C \tau^{\alpha-1+\frac{1}{p}} \norm{v}{X}, & \tiln \le n \le \tiln+1, \\
            C \tau^{1+\frac{1}{p}} (t_{n-1} - t_\tiln)^{\alpha-2} \norm{v}{X}, & n \ge \tiln + 2,
        \end{cases}
    \end{equation}
    which implies
    \begin{align}
        \Lpnorm{\weight^\alpha A \interr}{p}{J}{X}^{p}
        &\le C \paren*{ \tau^{\alpha-1+\frac{1}{p}} \norm{v}{X} }^{p}
        + C \paren*{ \tau^{1+\frac{1}{p}} \norm{v}{X} }^{p} \sum_{n \ge \tiln+2} (t_{n-1} - t_\tiln)^{(\alpha-2)p}.
        \label{eq:weighted-int-err-1}
    \end{align}
    Here, in general, for $l-m \ge 2$ and $\beta>1$, we have
    \begin{equation}
        \sum_{n \ge l} (t_{n-1}-t_m)^{-\beta} \tau 
        \le C \sum_{n \ge l} (t_{n-1}-t_m)^{-\beta} \tau_n
        \le C \int_{t_{l-1}}^\infty (t-t_m)^{-\beta} dt
        \le C (t_{l-1} - t_m)^{-\beta+1}
        \label{eq:summation-t_n}
    \end{equation}
    by the quasi-uniformity of the mesh.
    We thus obtain
    \begin{equation}
        \sum_{n \ge \tiln+2} (t_{n-1} - t_\tiln)^{(\alpha-2)p}
        = \tau^{-1} \sum_{n \ge \tiln+2} (t_{n-1} - t_\tiln)^{(\alpha-2)p} \tau
        \le C \tau^{(\alpha-2)p},
    \end{equation}
    since the assumption $\alpha<1+\frac{1}{p'}$ yields $(\alpha-2)p < -1$.
    Hence we obtain \eqref{eq:weighted-int-err} with the aid of \eqref{eq:weighted-int-err-1}.
\end{proof}

\section{Proof of discrete maximal regularity}
\label{sec:proof}

This section is devoted to the proof of \cref{thm:dmr}.

\subsection{Proof of \texorpdfstring{\cref{prop:weighted-error-X}}{Proposition 3.1}}

We first show \eqref{eq:dmr-Au} when $u_0 = 0$ by showing \cref{prop:weighted-error-X}.
We begin with the local estimate for $A\res$. 
This is the most important part for the proof of \cref{prop:weighted-error-X}. 

\begin{lemma}\label{lem:A-res-L^infty}
    Let $n \ge \tiln$ and $v \in X$. Then, we have 
    \begin{equation}
        \label{eq:A-res-L^infty}
        \Lpnorm{A \res}{\infty}{J_n}{X} \le 
        \begin{cases}
            C \tau^{-1} \norm{v}{X}, & \tiln \le n \le \tiln + 3, \\ 
            C \tau (t_{n-2} - t_\tiln)^{-2}  \norm{v}{X}, & n \ge \tiln + 4,
        \end{cases}
    \end{equation}
    where $C$ is independent of $\tau$,  $\tiln$, $\tilt$, $n$, and $v$.  
\end{lemma}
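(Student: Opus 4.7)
The plan is to represent $A\res$ via the Duhamel-type formula of \cref{cor:Duhamel-DG} applied to the equation $\res$ satisfies, then to exploit second-order smoothing of the discrete semigroup $E_{m,n}:=\prod_{l=m+1}^{n-1}R_{r,0}(\tau_l A)$. First, I would verify that $\res=I_\tau\regg-\reggtau$ solves the DG scheme with zero initial data and source $-A\interr$: combining Galerkin orthogonality $B_\tau(\regg-\reggtau,\varphi_\tau)=0$ with the properties of $I_\tau$ (vanishing at $t_n^-$ and moment orthogonality against $\cP^{r-1}$), integration by parts on each $J_n$ gives $B_\tau(\interr,\varphi_\tau)=\int_J\dual{A\interr}{\varphi_\tau}\,dt$, whence $B_\tau(\res,\varphi_\tau)=-\int_J\dual{A\interr}{\varphi_\tau}\,dt$. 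Writing $\res|_{J_n}=\sum_i Z^n_i\phi^n_i$ and noting $\interr\equiv 0$ on $J_m$ for $m<\tiln$, \cref{cor:Duhamel-DG} expresses $Z^n_i$ as a sum over $m\in\{\tiln,\dots,n\}$ of operator products $R_{i,0}(\tau_n A)\,E_{m+1,n}\,R_{r,j}(\tau_m A)$ (or $R_{i,j}(\tau_n A)$ for $m=n$) applied to the moments $\int_{J_m}A\interr\,\phi^m_j\,dt$.

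The key manipulation is to pull the inner $A$ out of the integral: setting $\xi^m_j:=\int_{J_m}\interr\,\phi^m_j\,dt$, one has $\int_{J_m}A\interr\,\phi^m_j\,dt=A\xi^m_j$, and the outer $A$ from $AZ^n_i$ can be combined with this inner $A$ and commuted through the functional calculus to produce $R_{i,0}(\tau_n A)\,A^2 E_{m+1,n}\,R_{r,j}(\tau_m A)$ acting on $\xi^m_j\in X$, for the non-local terms $m<n$. The requisite operator-norm estimates, obtained by Dunford--Taylor contour integration along $\Gamma_\delta$ using \eqref{eq:resolvent} and \eqref{eq:rational-DG}, are $\norm{AR_{i,j}(\tau A)}{}\le C\tau^{-1}$, $\norm{E_{m,n}}{}\le C$, and---crucially---the second-order smoothing $\norm{A^2 E_{m+1,n}}{}\le C(t_{n-1}-t_m)^{-2}$ whenever $n-m$ exceeds a fixed small integer; the last follows from $|R_{r,0}(\tau_l\lambda)|\le(1+C\tau_l|\lambda|)^{-1}$ after the substitution $\rho=(t_{n-1}-t_m)r$ along the sector.

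Bounding $\Lpnorm{A\res}{\infty}{J_n}{X}\le C\max_i\norm{AZ^n_i}{X}$, I would substitute $\norm{\xi^m_j}{X}\le C\Lpnorm{\interr}{1}{J_m}{X}$ which by \cref{lem:int-err-loc} is $\le C\tau\norm{v}{X}$ for $m\in\{\tiln,\tiln+1\}$ and $\le C\tau^3(t_{m-1}-t_\tiln)^{-2}\norm{v}{X}$ for $m\ge\tiln+2$, while the local term ($m=n$) is handled directly by $\norm{AR_{i,j}(\tau_n A)}{}\cdot\Lpnorm{A\interr}{1}{J_n}{X}$. For $\tiln\le n\le\tiln+3$ only $O(1)$ summands contribute and rough $O(\tau^{-1})$ bounds suffice. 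For $n\ge\tiln+4$, the near contributions ($m\in\{\tiln,\tiln+1\}$) give $C\tau(t_{n-1}-t_m)^{-2}\norm{v}{X}$; the intermediate sum over $m\ge\tiln+2$ equals a quantity of order $\tau^{-1}\sum_k(k(N-k))^{-2}\norm{v}{X}\le C\tau(t_{n-1}-t_\tiln)^{-2}\norm{v}{X}$ by quasi-uniformity (with $N=n-\tiln-2$, $k=m-\tiln-1$); the local contribution gives the same order. The target bound $C\tau(t_{n-2}-t_\tiln)^{-2}\norm{v}{X}$ follows from $t_{n-1}-t_\tiln\sim t_{n-2}-t_\tiln$.

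The main obstacle is securing two factors of discrete parabolic decay $(t_{n-1}-t_m)^{-1}$ from the semigroup for source intervals $m$ near $\tiln$, where $\Lpnorm{A\interr}{1}{J_m}{X}$ has full size $O(\norm{v}{X})$ without any $\tau$-gain. A naive first-order smoothing bound $\norm{AE_{m+1,n}}{}\le C/(t_{n-1}-t_m)$ would fall short by a factor $(t_{n-1}-t_m)/\tau$. Shifting the $A$ from the source onto the semigroup---trading the $L^1$ estimate on $A\interr$ for one on $\interr$, which gains a factor $\tau$---combined with the second-order smoothing, precisely produces the extra $\tau/(t_{n-1}-t_m)$ needed. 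Establishing the second-order smoothing estimate with constants independent of the mesh size and number of factors is the chief technical step.
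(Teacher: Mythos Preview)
Your proposal is correct and follows essentially the same route as the paper: derive the DG equation for $\res$ with source $-A\interr$, apply the Duhamel formula, shift one factor of $A$ from the source onto the discrete evolution to obtain an $A^2$-smoothing term, bound the latter by a Dunford contour integral, and control the resulting discrete convolution $\sum_m (t_n-t_m)^{-2}(t_{m-1}-t_\tiln)^{-2}$. The paper makes two points more explicit than your sketch: it separates the boundary summands $m\in\{n-1,n\}$ (your $E_{m+1,n}$ has too few factors there for the $A^2$ contour integral), and it proves the second-order smoothing not by a bare substitution but via the combinatorial lower bound $\sum_{l_1<l_2<l_3}\tau_{l_1}\tau_{l_2}\tau_{l_3}\ge C(t_n-t_m)^3$, which is precisely the ``chief technical step'' you flag.
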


\begin{proof}
    Let $Z^n_i \in X$ be such that $\res|_{J_n} = \sum_{i=0}^r Z^n_i \phi^n_i$, where $\phi^n_i \in \cP^r(J_n)$ is as in \eqref{eq:local-basis}.
    Then, it is clear that 
    \begin{equation}
        \Lpnorm{A \res}{\infty}{J_n}{X}
        \le \sum_{i=0}^r \norm{AZ^n_i}{X}
        \label{eq:A-res-L^infty-1}
    \end{equation}
    holds.

    We first notice that $\res$ satisfies
    \begin{equation}
        B_\tau(\res,\varphi_\tau) = - \int_J \dual{A\interr}{\varphi_\tau} dt,
        \qquad \forall \varphi_\tau \in S_\tau(X')
    \end{equation}
    by the definition of $I_\tau$ and the Galerkin orthogonality $B_\tau(\err,\varphi_\tau) = 0$ (cf.~\cite[p.~208]{MR2249024}).
    Therefore, by \cref{cor:Duhamel-DG} and $(\res)^{0,-}=0$, we have    
    \begin{multline}
        AZ^n_i 
        = - \sum_{m=\tiln}^{n-1} AR_{i,0}(\tau_n A) \paren*{ \prod_{l=m+1}^{n-1} R_{r,0}(\tau_l A) } \sum_{j=0}^r R_{r,j}(\tau_m A) \int_{J_m} A\interr(t) \phi^m_j(t) dt \\ 
        - \sum_{j=0}^r R_{i,j}(\tau_n A) \int_{J_n} A\interr(t) \phi^n_j(t) dt
    \end{multline}
    for $n \ge \tiln$.
    Observe that, for $m \le n-2$ and $\delta \in \paren*{\delta_0,\pi/2}$, 
    \begin{multline}
        A^2 R_{i,0}(\tau_n A) \paren*{ \prod_{l=m+1}^{n-1} R_{r,0}(\tau_l A) } R_{r,j}(\tau_m A) \\ 
        = \frac{1}{2\pi\sqrt{-1}} \int_{\Gamma_\delta} \lambda^2 R_{i,0}(\tau_n \lambda) \paren*{ \prod_{l=m+1}^{n-1} R_{r,0}(\tau_l \lambda) } R_{r,j}(\tau_m \lambda) (\lambda I - A)^{-1} d\lambda
        \label{eq:Dunford-DG}
    \end{multline}
    holds, where $\delta_0$ is as in \eqref{eq:resolvent}.
    Here, $\Gamma_\delta = \partial \Sigma_\delta$ and it is oriented so that $\operatorname{Im} \lambda$ decreases.
    We then decompose $AZ^n_i$ into three parts:
    \begin{align}
        AZ^n_i &= G_1 + G_2 + G_3, \\ 
        G_1 &= -\sum_{m=\tiln}^{n-2} A^2 R_{i,0}(\tau_n A) \paren*{ \prod_{l=m+1}^{n-1} R_{r,0}(\tau_l A) } \sum_{j=0}^r R_{r,j}(\tau_m A) \int_{J_m} \interr(t) \phi^m_j(t) dt, \\
        G_2 &= -A^2 R_{i,0}(\tau_n A) \sum_{j=0}^r R_{r,j}(\tau_{n-1} A) \int_{J_{n-1}} \interr(t) \phi^{n-1}_j(t) dt, \\ 
        G_3 &= -\sum_{j=0}^r A R_{i,j}(\tau_n A) \int_{J_n} A\interr(t) \phi^n_j(t) dt.
    \end{align}
    Here, we set $G_1=0$ if $n \le \tiln+1$.
    We address $G_2$ and $G_3$. 
    By \cref{lem:expression-DG}, the partial fraction decomposition, and the resolvent estimate \eqref{eq:resolvent}, we have
    \begin{equation}
        \norm{R_{i,j}(\tau A)}{\cL(X)} \le C, \qquad \norm{A R_{i,j}(\tau A)}{\cL(X)} \le C \tau^{-1}
        \label{eq:resolvent-Rij}
    \end{equation}
    for $\tau>0$. 
    Hence, by $\norm{\phi^m_j}{L^\infty(J_m)} = 1$ and \cref{lem:int-err-loc}, we have
    \begin{align}
        \norm{G_2}{X} &\le C \tau^{-2} \Lpnorm{\interr}{1}{J_{n-1}}{X}
        \le \begin{cases}
            C \tau^{-1} \norm{v}{X}, & n \le \tiln + 2, \\
            C \tau (t_{n-2} - t_\tiln)^{-2} \norm{v}{X}, & n \ge \tiln + 3,
        \end{cases} \\ 
        \norm{G_3}{X} &\le C \tau^{-1} \Lpnorm{A \interr}{1}{J_n}{X}
        \le \begin{cases}
            C \tau^{-1} \norm{v}{X}, & n \le \tiln + 1, \\
            C \tau (t_{n-1} - t_\tiln)^{-2} \norm{v}{X}, & n \ge \tiln + 2,
        \end{cases} 
    \end{align}
    which imply
    \begin{equation}
        \norm{G_2}{X} + \norm{G_3}{X} \le 
        \begin{cases}
            C \tau^{-1} \norm{v}{X}, & n \le \tiln + 2, \\
            C \tau (t_{n-2} - t_\tiln)^{-2} \norm{v}{X}, & n \ge \tiln + 3.
        \end{cases}
        \label{eq:G2G3}
    \end{equation}

    We address $G_1$. First we assume $n \le \tiln + 3$.
    In this case, by \eqref{eq:resolvent-Rij}, we obtain
    \begin{equation}
        \label{eq:G1-0}
        \norm{G_1}{X} 
        \le C \tau^{-2} \sum_{m=\tiln}^{n-2} \Lpnorm{\interr}{1}{J_m}{X} 
        \le C \tau^{-1} \norm{v}{X}.
    \end{equation}
    We then assume $n \ge \tiln +4$. 
    By \eqref{eq:Dunford-DG} and \ref{eq:rational-DG}, we have
    \begin{align}
        \norm{G_1}{X} 
        &\le C \sum_{m=\tiln}^{n-2} \int_{\Gamma_\delta} |\lambda|^2 \paren*{ \prod_{l=m+1}^n \frac{1}{1 + C \tau_l |\lambda|} } \frac{|d\lambda|}{|\lambda|} \Lpnorm{\interr}{1}{J_m}{X} \\ 
        &\le C \sum_{m=\tiln}^{n-2} \int_0^\infty x \paren*{ \prod_{l=m+1}^n \frac{1}{1 + C \tau_l x} } dx \Lpnorm{\interr}{1}{J_m}{X}.
    \end{align}
    We address the denominator (cf.~\cite[p.~1321]{MR1620144}). 
    Notice that 
    \begin{equation}
        \prod_{l=m+1}^n (1 + C \tau_l x) \ge 1 + C x^3 \sum_{m+1 \le l_1 < l_2 < l_3 \le n} \tau_{l_1} \tau_{l_2} \tau_{l_3}
        \label{eq:prod}
    \end{equation}
    for $x \ge 0$. We then prove
    \begin{equation}
        \sum_{m+1 \le l_1 < l_2 < l_3 \le n} \tau_{l_1} \tau_{l_2} \tau_{l_3} 
        \ge C \paren*{ \sum_{l=m+1}^n \tau_l }^3 = C (t_n-t_m)^3.
        \label{eq:tau-cube}
    \end{equation}

    (a) Assume $\tau \le (t_n-t_m)/4$. Then, noticing that
    \begin{equation}
        \paren*{ \sum_{l=m+1}^n \tau_l }^3
        = \sum_{l=m+1}^n \tau_l^3 + 3 \sum_{l_1 \ne l_2} \tau_{l_1}^2 \tau_{l_2} + 6 \sum_{m+1 \le l_1 < l_2 < l_3 \le n} \tau_{l_1} \tau_{l_2} \tau_{l_3} 
    \end{equation}
    holds and the assumption $\tau_l \le \tau \le (t_n-t_m)/4$ yields
    \begin{align}
        \sum_{l=m+1}^n \tau_l^3 
        &\le \frac{(t_n-t_m)^2}{16} \sum_{l=m+1}^n \tau_l 
        = \frac{(t_n-t_m)^3}{16}, \\
        \sum_{l_1 \ne l_2} \tau_{l_1}^2 \tau_{l_2} 
        &\le \frac{(t_n-t_m)}{4} \sum_{l_1} \tau_{l_1} \sum_{l_2} \tau_{l_2}
        = \frac{(t_n-t_m)^3}{4},
    \end{align}
    we have
    \begin{equation}
        \paren*{ \sum_{l=m+1}^n \tau_l }^3
        \le \paren*{\frac{1}{16} + \frac{3}{4}}(t_n-t_m)^3 + 6 \sum_{m+1 \le l_1 < l_2 < l_3 \le n} \tau_{l_1} \tau_{l_2} \tau_{l_3} .
    \end{equation}
    Therefore, we obtain \eqref{eq:tau-cube}.

    (b) Assume $\tau \ge (t_n-t_m)/4$.
    In this case, by the quasi-uniformity, we easily obtain \eqref{eq:tau-cube} since
    \begin{equation}
        \sum_{m+1 \le l_1 < l_2 < l_3 \le n} \tau_{l_1} \tau_{l_2} \tau_{l_3} 
        \ge \tau_{m+1} \tau_{m+2} \tau_{m+3}
        \ge C \tau^3
        \ge C (t_n - t_m)^3.
    \end{equation}

    From \eqref{eq:prod} and \eqref{eq:tau-cube}, we have
    \begin{equation}
        \prod_{l=m+1}^n (1 + C \tau_l x) \ge 1 + C (t_n - t_m)^3 x^3 ,
    \end{equation}
    which implies
    \begin{equation}
        \int_0^\infty x \paren*{ \prod_{l=m+1}^n \frac{1}{1 + C \tau_l x} } dx 
        \le C \int_0^\infty \frac{x}{1 + C (t_n - t_m)^3 x^3 } dx 
        \le C (t_n - t_m)^{-2} .
    \end{equation}
    Hence, from \cref{lem:int-err-loc} and $\tiln<\tiln+1<n$, we obtain
    \begin{align}
        \label{eq:G1-1}
        \norm{G_1}{X}
        &\le C \sum_{m=\tiln}^{n-2} (t_n-t_m)^{-2} \Lpnorm{\interr}{1}{J_m}{X} \\
        &\le C \tau \norm{v}{X} \sum_{m=\tiln}^{\tiln+1} (t_n-t_m)^{-2} 
        + C \tau^3 \norm{v}{X} \sum_{m=\tiln+2}^{n-2} (t_n-t_m)^{-2}  (t_{m-1} - t_\tiln)^{-2} \notag \\ 
        &\le C \tau (t_n - t_{\tiln+1})^{-2} \norm{v}{X} 
        + C \tau^3 \norm{v}{X} \sum_{m=\tiln+2}^{n-2} (t_n-t_m)^{-2}  (t_{m-1} - t_\tiln)^{-2} . \notag 
    \end{align}
    Notice that the last term is well-defined when $n \ge \tiln + 4$.

    We then address the last summation. Observe that 
    \begin{align}
        \label{eq:G1-2}
        \sum_{m=\tiln+2}^{n-2} (t_n-t_m)^{-2} (t_{m-1}-t_\tiln)^{-2}
        &\le C \tau^{-1} \sum_{m=\tiln+2}^{n-2} (t_n-t_m)^{-2} (t_{m-1}-t_\tiln)^{-2} \tau_m \\ 
        &\le C \tau^{-1} \sum_{m=\tiln+2}^{n-2} \int_{J_m} (t_n-t)^{-2} (t-t_\tiln)^{-2} dt \notag\\ 
        &= C \tau^{-1} \int_{t_{\tiln+1}}^{t_{n-2}} (t_n-t)^{-2} (t-t_\tiln)^{-2} dt \notag
    \end{align}
    since $t_n - t = (t_n - t_m) + (t_m - t) \le C (t_n - t_m)$ and similarly $t-t_\tiln \le C(t_{m-1} - t_\tiln)$ for $t \in J_m$.
    Set $s = (t-t_\tiln)/(t_n - t_\tiln)$ so that $t-t_\tiln = (t_n - t_\tiln)s$ and $t_n-t = (t_n-t_\tiln)(1-s)$.
    Then,
    \begin{equation}
        \int_{t_{\tiln+1}}^{t_{n-2}} (t_n-t)^{-2} (t-t_\tiln)^{-2} dt 
        = (t_n - t_\tiln)^{-3} \int_{s_1}^{s_2} s^{-2} (1-s)^{-2} ds,
        \label{eq:G1-3}
    \end{equation}
    where $s_1 = (t_{\tiln+1}-t_\tiln)/(t_n - t_\tiln)$ and $s_2 = (t_{n-2}-t_\tiln)/(t_n-t_\tiln)$.
    We will show that
    \begin{equation}
        I \coloneqq \int_{s_1}^{s_2} s^{-2} (1-s)^{-2} ds \le C \tau^{-1} (t_n - t_\tiln)
        \label{eq:G1-4}
    \end{equation} 
    holds. If $s_2<1/2$, then we have
    \begin{equation}
        I 
        \le \int_{s_1}^{1/2} s^{-2} (1-s)^{-2} ds 
        \le 4 \int_{s_1}^{1/2} s^{-2} ds
        \le 4s_1^{-1} 
        \le C \tau^{-1} (t_n - t_\tiln)
    \end{equation}
    and obtain \eqref{eq:G1-4}. Similarly, if $s_1 > 1/2$, we have
    \begin{equation}
        I 
        \le \int_{1/2}^{s_2} s^{-2} (1-s)^{-2} ds 
        \le 4 (1-s_2)^{-1}
        \le C \tau^{-1} (t_n - t_\tiln).
    \end{equation}
    If $s_1 \le 1/2 \le s_2$, we have
    \begin{equation}
        I 
        = \paren*{ \int_{s_1}^{1/2} + \int_{1/2}^{s_2} } s^{-2} (1-s)^{-2} ds 
        \le C \tau^{-1} (t_n - t_\tiln)
    \end{equation}
    by the above estimates. In any cases, we obtain  \eqref{eq:G1-4}.

    Finally, by the quasi-uniformity of the mesh and $n \ge \tiln + 4$, we have
    \begin{equation}
        t_{n-2} - t_\tiln \ge t_{\tiln+2} - t_\tiln \ge C\tau,
    \end{equation}
    which yields
    \begin{equation}
        t_n - t_{\tiln+1}
        = t_{n-2} - t_\tiln + (\tau_{n-1}+\tau_n-\tau_{\tiln+1}) 
        \le t_{n-2} - t_\tiln + 2\tau 
        \le C(t_{n-2} - t_\tiln).
    \end{equation}
    Therefore, together with \eqref{eq:G1-1}, \eqref{eq:G1-2}, \eqref{eq:G1-3}, and \eqref{eq:G1-4}, we have
    \begin{equation}
        \norm{G_1}{X}
        \le C \tau (t_{n-2} - t_\tiln)^{-2} \norm{v}{X} 
        \label{eq:G1}
    \end{equation}
    when $n \ge \tiln+4$. 
    Summarizing \eqref{eq:A-res-L^infty-1}, \eqref{eq:G2G3},  \eqref{eq:G1-0}, and \eqref{eq:G1},  we obtain the desired estimate \eqref{eq:A-res-L^infty} and hence complete the proof.
\end{proof}

We are ready to show \cref{prop:weighted-error-X}, which yields the discrete maximal regularity with $u_0=0$ together with \cref{lem:dmr-to-weighted}.

\begin{proof}[Proof of \cref{prop:weighted-error-X}]
    In view of \cref{lem:weighted-int-err}, it suffices to show
    \begin{equation}
        \Lpnorm{\weight^\alpha A\res}{p}{J}{X} 
        \le C \tau^{\alpha-\frac{1}{p'}} \norm{v}{X}.
        \label{eq:weighted-res}
    \end{equation}
    By a similar way to show \eqref{eq:weight-Jn}, we have 
    \begin{equation}
        \weight|_{J_n} \le C (t_{n-2} - t_\tiln)
    \end{equation} 
    for $n \ge \tiln+4$.
    Thus, thanks to \cref{lem:A-res-L^infty}, we have 
    \begin{equation}
        \Lpnorm{\weight^\alpha A\res}{p}{J}{X} 
        \le C \tau^{\alpha - 1 + \frac{1}{p}} \norm{v}{X} 
        + C \tau^{1+\frac{1}{p}} \bracket*{ \sum_{n \ge \tiln+4} (t_{n-2} - t_\tiln)^{(\alpha-2)p} }^{1/p} \norm{v}{X}. 
        \label{eq:weighted-res-1}
    \end{equation}
    Fix $\alpha \in \paren*{\frac{1}{p'}, \, \frac{1}{p'}+1}$ arbitrarily so that $(\alpha-2)p< -1$.
    Then, owing to \eqref{eq:summation-t_n}, we obtain 
    \begin{equation} 
        \bracket*{ \sum_{n \ge \tiln+4} (t_{n-2} - t_\tiln)^{(\alpha-2)p} }^{1/p} 
        = \bracket*{ \tau^{-1} \sum_{n \ge \tiln+3} (t_{n-1} - t_\tiln)^{(\alpha-2)p} \tau }^{1/p}
        \le C \tau^{\alpha-2} .
        \label{eq:T1} 
    \end{equation} 
    Therefore, by \eqref{eq:weighted-res-1} and \eqref{eq:T1}, we obtain \eqref{eq:weighted-res} for $\alpha \in \paren*{\frac{1}{p'}, \, \frac{1}{p'}+1}$.
    Hence we complete the proof of \cref{prop:weighted-error-X}.
\end{proof}

\begin{remark}[On the assumption $r \ge 1$]\label{rem:r}
    We assumed $r \ge 1$ in \cref{thm:dmr} and it plays an essential role in the above proof.
    When $r=0$, the local interpolation error estimate for $\interr$ in \cref{lem:int-err-loc} is replaced by 
    \begin{equation}
        \Lpnorm{\interr}{1}{J_n}{X}
        \le C \tau \Lpnorm{\partial_t \regg}{1}{J_n}{X}
        \le \begin{cases}
            C \tau \norm{v}{X}, & n \le \tiln+1, \\
            C \tau^2 (t_{n-1}-t_\tiln)^{-1} \norm{v}{X}, & n \ge \tiln + 2.
        \end{cases}
    \end{equation}
    This affects the estimate of $G_1$ in the above proof.
    Indeed, the estimate \eqref{eq:G1-1} is replaced by 
    \begin{align}
        \norm{G_1}{X}
        &\le C \tau \norm{v}{X} \sum_{m=\tiln}^{\tiln+1} (t_n-t_m)^{-2} 
        + C \tau^2 \norm{v}{X} \sum_{m=\tiln+2}^{n-2} (t_n-t_m)^{-2}  (t_{m-1} - t_\tiln)^{-1} ,
    \end{align}
    and the last summation will give a logarithmic factor.
    Hence our approach does not cover the case of $r=0$.
\end{remark}

We now establish \eqref{eq:dmr-Au} when $u_0 = 0$ thanks to \cref{lem:dmr-to-weighted} and \cref{prop:weighted-error-X}.

\subsection{Estimates for the initial value problem}

We show the estimate \eqref{eq:dmr-Au} when $u_0 \ne 0$.
It suffices to consider the case $f \equiv 0$.

\begin{proof}[Proof of \eqref{eq:dmr-Au} with $f \equiv 0$]
    Recall the real interpolation space $\initialspace$ is characterized by
    \begin{align}
        \initialspace &= \Set*{ u \in X \given \int_0^\infty \abs*{t^{-1} K(t,u)}^p dt < \infty}, \\
        \norm{u}{\initialspace} &= \paren*{ \int_0^\infty \abs*{t^{-1} K(t,u)}^p dt }^{1/p} 
    \end{align}
    where 
    \begin{equation}
        K(t,x) \coloneqq \inf_{x=a+b,\, a\in X, b \in D(A)} \paren*{ \norm{a}{X} + t \norm{Ab}{X} }.
    \end{equation}
    Now let $u_\tau \in S_\tau(D(A))$ be the solution of \eqref{eq:DG} with $f \equiv 0$.
    We will show the a priori estimate
    \begin{equation}
        \Lpnorm{Au_\tau}{p}{J}{X} \le C \norm{u_0}{\initialspace}.
        \label{eq:dmr-initial}
    \end{equation}
    For $u_0 \in \initialspace$, let $a \in X$ and $b \in D(A)$ be such that $u_0 = a + b$. 
    Let moreover $u_\tau^x \in S(D(A))$ ($x=a,b$) be the solution of \eqref{eq:DG} with $f \equiv 0$ and $(u_\tau^x)^{0,-} = x$, and finally we set
    \begin{equation}
        u_\tau^x|_{J_n} = \sum_{i=0}^r U^{x,n}_i \phi^n_i, \qquad U^{x,n}_i \in X
    \end{equation}
    for each $n$.
    Then, by \cref{cor:Duhamel-DG}, we have
    \begin{equation}
        U^{x,n}_i 
        = R_{i,0}(\tau_n A) \paren*{ \prod_{l=1}^{n-1} R_{r,0}(\tau_l A) } x.
    \end{equation}
    Therefore, by the same way as in the proof of \cref{lem:A-res-L^infty} (or \cite[Theorem~5.1]{MR1620144}), we obtain
    \begin{equation}
        \Lpnorm{Au_\tau^a}{\infty}{J_n}{X} \le C t_n^{-1} \norm{a}{X}, \qquad 
        \Lpnorm{Au_\tau^b}{\infty}{J_n}{X} \le C \norm{A b}{X},
    \end{equation}
    which imply 
    \begin{equation}
        \norm{Au_\tau^a(t)}{X} \le C t^{-1} \norm{a}{X}, \qquad 
        \norm{Au_\tau^b(t)}{X} \le C \norm{A b}{X}
    \end{equation}
    for each $t \in J_n$.
    Since $a$ and $b$ are arbitrary, we have
    \begin{equation}
        \norm{Au_\tau(t)}{X} \le C t^{-1} K(t,u_0), \qquad \forall t \in J_n,
    \end{equation}
    which yields
    \begin{equation}
        \Lpnorm{Au_\tau}{p}{J_n}{X} \le C \paren*{ \int_{J_n} \abs*{t^{-1} K(t,u_0)}^p dt }^{1/p}.
    \end{equation}
    Hence, summing this up with respect to $n$, we establish \eqref{eq:dmr-initial}.
\end{proof}

Summarizing the above arguments, we eventually establish \eqref{eq:dmr-Au}, and hence we complete the proof of \cref{thm:dmr} owing to \cref{lem:dudt,lem:jump}.

\begin{remark}
    The constant $C$ in the main result \eqref{eq:dmr} (here we set $C_\text{DMR}$) depends on the exponent $p$, the degree of polynomials $r$, the constant of the quasi-uniformity of the mesh and that of the norm estimate of $\regdelta$ (i.e., \eqref{eq:delta-norm}).
    The dependency on $r$ stems from the constants appearing in \eqref{eq:rational-DG} and \eqref{eq:int-err},
    and that on $p$ stems from the constant of the maximal regularity \eqref{eq:mr} (we set $C_\text{MR}$) .
    In many cases, $C_\text{MR} \nearrow \infty$ when $p \to 1,\infty$ and thus $C_\text{DMR}$ would also goes to infinity when $p \to 1,\infty$.
\end{remark}

\section{\texorpdfstring{Corollaries}{Corollaries}}
\label{sec:application}

\subsection{One-step version of discrete maximal regularity}

Our estimate \eqref{eq:dmr} implies the existing estimate of discrete maximal regularity as a one-step method.
\begin{corollary}
    Under the same assumptions as in \cref{thm:dmr}, we have
    \begin{multline}
        \paren*{\sum_{n=1}^N \norm*{\frac{u_\tau^{n,-}-u_\tau^{n-1,-}}{\tau_n}}{X}^p \tau_n }^{1/p} 
        + \paren*{\sum_{n=1}^N \norm*{Au_\tau^{n,-}}{X}^p \tau_n }^{1/p} \\
        \le C \paren*{ \Lpnorm{f}{p}{J}{X} + \norm{u_0}{\initialspace} }.
        \label{eq:dmr-classical}
    \end{multline}
\end{corollary}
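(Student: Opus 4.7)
The plan is to bound each of the two sums on the left-hand side of \eqref{eq:dmr-classical} by the left-hand side of the main estimate \eqref{eq:dmr}, after which the conclusion follows immediately by applying \cref{thm:dmr}.

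First I would handle the $Au_\tau^{n,-}$ term. Since $u_\tau|_{J_n}$ lies in $\cP^r(J_n;D(A))$, the function $Au_\tau|_{J_n}$ is an $X$-valued polynomial of degree $r$, and a standard inverse inequality (reduced to the real-valued case by testing against elements of $X'$ and rescaling to $(0,1)$) gives
\begin{equation}
    \norm{Au_\tau^{n,-}}{X} \le \Lpnorm{Au_\tau}{\infty}{J_n}{X} \le C \tau_n^{-1/p} \Lpnorm{Au_\tau}{p}{J_n}{X},
\end{equation}
with $C$ depending only on $r$ and $p$. Raising to the $p$-th power, multiplying by $\tau_n$, and summing over $n$ yields the desired bound $(\sum_n \norm{Au_\tau^{n,-}}{X}^p \tau_n)^{1/p} \le C \Lpnorm{Au_\tau}{p}{J}{X}$.

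Second I would handle the difference-quotient term via the telescoping identity
\begin{equation}
    u_\tau^{n,-} - u_\tau^{n-1,-} = \jump{u_\tau}^{n-1} + \int_{J_n} \partial_t u_\tau \, dt,
\end{equation}
valid for every $n \ge 1$ (with the convention $u_\tau^{0,-} = u_0$, so that $\jump{u_\tau}^0 = u_\tau^{0,+} - u_0$). Applying the triangle inequality and estimating $\int_{J_n} \norm{\partial_t u_\tau}{X} dt \le \tau_n^{1-1/p}\Lpnorm{\partial_t u_\tau}{p}{J_n}{X}$ by H\"older's inequality gives
\begin{equation}
    \norm*{\frac{u_\tau^{n,-} - u_\tau^{n-1,-}}{\tau_n}}{X} \le \norm*{\frac{\jump{u_\tau}^{n-1}}{\tau_n}}{X} + \tau_n^{-1/p} \Lpnorm{\partial_t u_\tau}{p}{J_n}{X}.
\end{equation}
Taking the $p$-th power, multiplying by $\tau_n$, and summing bounds the first sum in \eqref{eq:dmr-classical} by a constant times $(\sum_n \norm{\jump{u_\tau}^{n-1}/\tau_n}{X}^p \tau_n)^{1/p} + (\sum_n \Lpnorm{\partial_t u_\tau}{p}{J_n}{X}^p)^{1/p}$, both of which appear on the left-hand side of \eqref{eq:dmr}.

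There is essentially no substantive obstacle here; the corollary is a routine post-processing of \cref{thm:dmr}. The only conceptual point worth emphasizing is that \eqref{eq:dmr-classical} concerns the \emph{nodal values} $u_\tau^{n,-}$ and $Au_\tau^{n,-}$, whereas \eqref{eq:dmr} controls piecewise $L^p$-norms together with the jumps; the inverse inequality for $X$-valued polynomials and the telescoping identity above are precisely what bridges this gap.
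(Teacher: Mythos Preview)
Your proof is correct and essentially identical to the paper's own proof: the paper also uses the telescoping identity $u_\tau^{n,-}-u_\tau^{n-1,-} = \int_{J_n}\partial_t u_\tau\,dt + \jump{u_\tau}^{n-1}$ together with H\"older's inequality for the difference-quotient term, and the inverse inequality $\norm{Au_\tau^{n,-}}{X}\le \Lpnorm{Au_\tau}{\infty}{J_n}{X}\le C\tau_n^{-1/p}\Lpnorm{Au_\tau}{p}{J_n}{X}$ for the second term.
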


\begin{proof}
    An elementary relation
    \begin{equation}
        u_\tau^{n,-}-u_\tau^{n-1,-}
        = (u_\tau^{n,-} - u_\tau^{n-1,+}) + \jump{u_\tau}^{n-1}
        = \int_{J_n} \partial_t u_\tau dt + \jump{u_\tau}^{n-1}
    \end{equation}
    implies
    \begin{equation}
        \norm*{\frac{u_\tau^{n,-}-u_\tau^{n-1,-}}{\tau_n}}{X}
        \le \tau_n^{-\frac{1}{p}} \Lpnorm{\partial_t u}{p}{J_n}{X}
        + \norm*{\frac{\jump{u_\tau}^{n-1}}{\tau_n}}{X}.
    \end{equation}
    Moreover, by the inverse inequality, we easily obtain
    \begin{equation}
        \norm*{Au_\tau^{n,-}}{X}
        \le \Lpnorm{Au_\tau}{\infty}{J_n}{X}
        \le C \tau_n^{-\frac{1}{p}} \Lpnorm{Au_\tau}{p}{J_n}{X}.
    \end{equation}
    These estimates imply \eqref{eq:dmr-classical} with the aid of \eqref{eq:dmr}.
\end{proof}

\subsection{Optimal order error estimate}

An optimal order error estimate for \eqref{eq:DG} can be derived by using the discrete maximal regularity.
The proof is almost the same as in \cite[Theorem~9]{MR3606467} and thus we omit the proof.

\begin{corollary}
    Under the same assumptions as in \cref{thm:dmr}, we have
    \begin{equation}
        \norm{u-u_\tau}{L^p(J;X)} \le C \norm{u-I_\tau u}{L^p(J;X)},
    \end{equation}
    where $u$ is the solution of the Cauchy problem \eqref{eq:cauchy}.
    Moreover, if $u \in W^{\tilde{r}+1,p}(J;X)$ with $0 \le \tilde{r} \le r$, then we have 
    \begin{equation}
        \norm{u-u_\tau}{L^p(J;X)} \le C \tau^{\tilde{r}+1} \norm{\partial_t^{\tilde{r}+1} u}{L^p(J;X)}.
    \end{equation}
\end{corollary}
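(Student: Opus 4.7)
The plan is to split the error as $u - u_\tau = \eta + e_\tau$ with $\eta \coloneqq u - I_\tau u$ and $e_\tau \coloneqq I_\tau u - u_\tau \in S_\tau(D(A))$, bound $\eta$ directly by the interpolation property of $I_\tau$, and reduce the estimation of $e_\tau$ to the discrete maximal regularity via a duality argument. The key preparatory step is to identify $e_\tau$ as a DG solution driven by $-A\eta$: starting from the Galerkin orthogonality $B_\tau(u-u_\tau,\varphi_\tau)=0$ and using (i) $\eta^{n,-}=0$ for every $n$ by the nodal condition $(I_\tau u)^{n,-}=u(t_n)$, (ii) $\int_{J_n}\dual{\eta}{\partial_t \varphi_\tau}dt=0$ since $\partial_t\varphi_\tau|_{J_n}\in \cP^{r-1}(J_n;X')$, and (iii) the fact that $u$ is continuous in time (so $\jump{\eta}^{n-1}=\eta^{n-1,+}$), the jump and boundary terms collapse to give $B_\tau(e_\tau,\varphi_\tau)=-\int_J\dual{A\eta}{\varphi_\tau}dt$ for every $\varphi_\tau\in S_\tau(X')$, with $e_\tau^{0,-}=0$. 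This is the compatibility property used on p.~208 of \cite{MR2249024} and exploited already in \cref{lem:dmr-to-weighted}.

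To bound $\|e_\tau\|_{L^p(J;X)}$, I would appeal to duality. For $g\in L^{p'}(J;X')$ with $\|g\|_{L^{p'}(J;X')}\le 1$, let $\gamma$ solve the dual problem $-\partial_t\gamma+A'\gamma=g$, $\gamma(T)=0$, and let $\gamma_\tau\in S_\tau(D(A'))$ be its DG approximation (solving \eqref{eq:DG-dual} with $\regdelta\varphi$ replaced by $g$). Using $\gamma_\tau$ as a test function in the equation for $e_\tau$ and invoking \eqref{eq:duality},
\begin{equation}
\int_J\dual{e_\tau}{g}\,dt=B'_\tau(e_\tau,\gamma_\tau)=B_\tau(e_\tau,\gamma_\tau)=-\int_J\dual{A\eta}{\gamma_\tau}\,dt=-\int_J\dual{\eta}{A'\gamma_\tau}\,dt.
\end{equation}
Since $X'$ is reflexive, $A'$ has maximal regularity on $X'$, and $0\in\rho(A')$, \cref{thm:dmr} applied with $(X,A,p)\leftarrow(X',A',p')$ furnishes $\|A'\gamma_\tau\|_{L^{p'}(J;X')}\le C\|g\|_{L^{p'}(J;X')}$. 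Hence $\bigl|\int_J\dual{e_\tau}{g}\,dt\bigr|\le C\|\eta\|_{L^p(J;X)}$, and the supremum over $g$ yields $\|e_\tau\|_{L^p(J;X)}\le C\|u-I_\tau u\|_{L^p(J;X)}$. Combining with the trivial bound on $\eta$ gives the first assertion.

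The second assertion is a direct application of the interpolation error estimate \eqref{eq:int-err} with $l=0$ and $m=\tilde r$, summed over $n$ and combined with quasi-uniformity: $\|u-I_\tau u\|_{L^p(J;X)}\le C\tau^{\tilde r+1}\|\partial_t^{\tilde r+1}u\|_{L^p(J;X)}$.

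The only conceptual subtlety is justifying the use of \cref{thm:dmr} for the \emph{dual} operator on $X'$; this is, however, immediate from the hypotheses, since maximal regularity and $0\in\rho(A)$ transfer to $A'$ on the reflexive space $X'$, and the DG time-stepping scheme is structurally symmetric under $t\mapsto T-t$ (as already used implicitly after \cref{lem:dmr-to-weighted}). Consequently no genuinely new obstacle arises, which is consistent with the author's remark that the argument is essentially that of \cite[Theorem~9]{MR3606467}.
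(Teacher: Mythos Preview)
Your argument is correct and is precisely the duality approach the paper defers to via \cite[Theorem~9]{MR3606467}: split $u-u_\tau=\eta+e_\tau$, identify $e_\tau$ as a DG solution with source $-A\eta$ and zero initial data, and estimate $\|e_\tau\|_{L^p(J;X)}$ by testing against the dual DG solution and invoking \cref{thm:dmr} for $(X',A',p')$. One minor remark: quasi-uniformity is not actually needed for the second assertion, since $\tau_n\le\tau$ already suffices in \eqref{eq:int-err}.
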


Finally, we consider the fully discrete case. 
Let us consider the parabolic equation
\begin{equation}
    \begin{cases}
        \partial_t u(x,t) - \Delta u(x,t) = f(x,t), & \text{in } \Omega \times J, \\ 
        u(x,t) = 0, & x \in \partial\Omega, \, t \in J, \\
        u(x,0) = u_0(x), & x \in \Omega,
    \end{cases}
    \label{eq:parabolic}
\end{equation}
where $\Omega \subset \bR^d$ is a bounded domain, $J = (0,T)$ is a temporal interval, $f \colon \Omega \times J \to \bR$ is a given function, and $u_0\colon \Omega \to \bR$ is a given initial data.
Let $A = -\Delta$ be the Laplace operator with Dirichlet boundary condition on $L^q(\Omega)$ with $q \in (1,\infty)$ and $D(A) \coloneqq W^{2,q}(\Omega) \cap W^{1,q}_0(\Omega)$ be the domain  of $A$.
Assume the boundary of $\Omega$ is sufficiently smooth. 
Then, the solution of \eqref{eq:parabolic} has maximal regularity estimate
\begin{equation}
    \Lpnorm{\partial_t u}{p}{J}{L^q(\Omega)} + \Lpnorm{A u}{p}{J}{L^q(\Omega)}
    \le C \paren*{ \Lpnorm{f}{p}{J}{L^q(\Omega)} + \norm{u_0}{(L^q(\Omega),D(A))_{1-\frac{1}{p},p}} }
\end{equation}
if $f \in L^p(J,L^q(\Omega))$ and $u_0 \in (L^q(\Omega),D(A))_{1-\frac{1}{p},p}$ with $p,q \in (1,\infty)$.

Assume moreover $\Omega$ is convex for simplicity and consider the finite element approximation of \eqref{eq:parabolic}.
Let $\Set{ \cT_h }_h$ be a family of triangulations and assume it is shape-regular and quasi-uniform, $\Omega_h \subset \Omega$ be the polygonal approximation of $\Omega$ consisting of triangles in $\cT_h$, $X_h \subset H^1_0(\Omega_h)$ be the conforming $P^s$-finite element space associated to $\cT_h$ with $s \ge 1$, and finally $A_h \colon X_h \to X_h$ be the discrete Laplace operator defined by 
\begin{equation}
    \paren{A_h u_h,v_h}_{\Omega_h} = \paren{\nabla u_h, \nabla v_h}_{\Omega_h}, \qquad \forall u_h,v_h \in X_h,
\end{equation} 
where $(\cdot,\cdot)_{\Omega_h}$ is the $L^2$-inner product over $\Omega_h$. 
The operator $A_h$ is invertible and satisfies the estimate
\begin{equation}
    \norm{A_h^{-1} f_h}{L^q(\Omega_h)} \le C \norm{f_h}{L^q(\Omega_h)}
\end{equation}
uniformly in $h$. Indeed, letting $u_h = A_h^{-1}f_h$ and $u = A^{-1} \bar{f_h} \in D(A)$, where $\bar{f_h} \in L^q(\Omega)$ is the extension of $f_h$ by zero outside $\Omega_h$, we obtain 
\begin{equation}
    \norm{u_h}{L^q(\Omega_h)}
    \le \norm{u_h - u}{L^q(\Omega_h)} + \norm{u}{L^q(\Omega_h)}
    \le C (h^2+1) \norm{u}{W^{2,q}(\Omega)}
    \le C \norm{f_h}{L^q(\Omega_h)}
\end{equation}
by the $L^q$-error estimate for the finite element method (see e.g.~\cite{MR2373954}) and the elliptic regularity.
This means $0 \in \rho(A_h)$ uniformly in $h$.

Now, the finite element counterpart of \eqref{eq:parabolic} is the following Cauchy problem to find $u_h \in C^0(\bar{J}; X_h) \cap C^1(J;X_h)$ satisfying
\begin{equation}
    \begin{cases}
        \partial_t u_h(t) + A_h u_h(t) = f_h(t), & t \in J, \\ 
        u_h(0) = u_{0,h},
    \end{cases}
    \label{eq:semi-para}
\end{equation}
where $f_h \in L^p(J;X_h)$ and $u_{0,h} \in X_h$ are given data.
Under the above assumptions, $A_h$ has maximal regularity uniformly in $h$, and thus the solution \eqref{eq:semi-para} satisfies
\begin{multline}
    \Lpnorm{\partial_t u_h}{p}{J}{L^q(\Omega)} + \Lpnorm{A_h u_h}{p}{J}{L^q(\Omega)} \\
    \le C \paren*{ \Lpnorm{f_h}{p}{J}{L^q(\Omega)} + \norm{u_{0,h}}{(X_{h,q}, D(A_{h,q}))_{1-\frac{1}{p},p}} },
\end{multline}
where $X_{h,q}$ and $D(A_{h,q})$ are the finite element space $X_h$ equipped with the norm $\norm{\cdot}{L^q(\Omega_h)}$ and $\norm{A_h \cdot}{L^q(\Omega_h)}$, respectively.
The proof of this estimate is given in \cite{MR2218965} for $u_{0,h} = 0$, and the estimate for general $u_{0,h}$ and $f_h=0$ is obtained by the characterization of the real interpolation space by analytic semigroups (see e.g.~\cite[Proposition~6.2]{MR3753604}).
Hence the operator $A_h$ satisfies the assumptions of \cref{thm:dmr} on the space $X_{h,q}$ for $q \in (1,\infty)$.

Let moreover $X_{\tau,h}^{r,s} \coloneqq S_\tau(X_h)$ be the space of space-time piecewise polynomials associated to the temporal mesh as above.
Then, fully discrete scheme for \eqref{eq:parabolic} is formulated as follows: find $u_{\tau,h} \in X_{\tau,h}^{r,s}$ such that 
\begin{equation}
    B_{\tau,h}(u_{\tau,h},\varphi_{\tau,h}) = \int_J (f_h,\varphi_{\tau,h}) dt + (u_{0,h}, \varphi_{\tau,h}^{0,+}),
    \qquad \forall \varphi_{\tau,h} \in X_{\tau,h}^{r,s},
    \label{eq:DG-para}
\end{equation}
where $f_h \in L^p(J;X_h)$ and $u_{0,h} \in X_h$ are as above and
\begin{multline}
    B_{\tau,h}(v_{\tau,h},\varphi_{\tau,h})
    \coloneqq \sum_{n=1}^N \int_{J_n} \bracket*{ 
        \paren*{\partial_t v_{\tau,h}, \varphi_{\tau,h}}_{\Omega_h} 
        + \paren*{\nabla v_{\tau,h}, \nabla \varphi_{\tau,h}}_{\Omega_h} 
    } dt \\
    + \sum_{n=2}^N \paren*{\jump{v_{\tau,h}}^{n-1}, \varphi_{\tau,h}^{n-1,+}}_{\Omega_h}
    + \paren*{v_{\tau,h}^{0,+}, \varphi_{\tau,h}^{0,+}}_{\Omega_h}
\end{multline}
for $v_{\tau,h}, \varphi_{\tau,h} \in X_{\tau,h}^{r,s}$.
The problem \eqref{eq:DG-para} is nothing but the DG scheme \eqref{eq:DG} with $X=X_{h,q}$ and $A=A_h$.
Hence we have the following estimates.

\begin{corollary}
    Let $u_{\tau,h} \in X_{\tau,h}^{r,s}$ be the solution of \eqref{eq:DG-para}. 
    Then, under the above assumptions, we have the fully discrete maximal regularity
    \begin{multline}
        \label{eq:dmr-full}
        \paren*{ \sum_{n=1}^N \Lpnorm{\partial_t u_{\tau,h}}{p}{J_n}{L^q(\Omega_h)}^p }^{1/p} 
        + \Lpnorm{A_h u_{\tau,h}}{p}{J}{L^q(\Omega_h)} 
        + \paren*{\sum_{n=1}^N \norm*{\frac{\jump{u_{\tau,h}}^{n-1}}{\tau_n}}{X}^p \tau_n }^{1/p}  \\ 
        \le C \paren*{ \Lpnorm{f_h}{p}{J}{L^q(\Omega_h)} + \norm{u_{0,h}}{(X_{h,q}, D(A_{h,q}))_{1-\frac{1}{p},p}} },
    \end{multline}
    for $p,q \in (1,\infty)$, where $C$ is independent of $h$, $\tau$, $f_h$, and $u_{0,h}$.

    Moreover, let $f \in L^p(J;L^q(\Omega))$, $u_0 \in \paren{L^q(\Omega), D(A)}_{1-\frac{1}{p},p}$, and $u$ be the solution of \eqref{eq:parabolic}.
    If $f_h(t) = P_h f(t)$ and $u_{0,h} = P_h u_0$ then we have the error estimate
    \begin{multline}
        \label{eq:error-full}
        \Lpnorm{u-u_{\tau,h}}{p}{J}{L^q(\Omega_h)} \\
        \le C \paren*{
            \Lpnorm{u-I_\tau u}{p}{J}{L^q(\Omega_h)}
            + \Lpnorm{P_h u - u}{p}{J}{L^q(\Omega_h)}
            + \Lpnorm{R_h u - u}{p}{J}{L^q(\Omega_h)}
        },
    \end{multline}
    where $P_h$ and $R_h$ are the $L^2$-orthogonal and the Ritz projection onto $X_h$, respectively.
    In particular, if the solution $u$ has the regularity $u \in W^{\tilde{r}+1,p}(J,L^q(\Omega)) \cap L^p(J; W^{\tilde{s}+1,q}(\Omega))$ for $0 \le \tilde{r} \le r$ and $0 \le \tilde{s} \le s$, then we have
    \begin{equation}
        \label{eq:error-full-ord}
        \Lpnorm{u-u_{\tau,h}}{p}{J}{L^q(\Omega_h)}
        \le C \paren*{
            \tau^{\tilde{r}+1} \Lpnorm{\partial_t^{\tilde{r}+1} u}{p}{J}{L^q(\Omega)}
            + h^{\tilde{s}+1} \Lpnorm{\nabla^{\tilde{s}+1} u}{p}{J}{L^q(\Omega)}
        }.
    \end{equation}
\end{corollary}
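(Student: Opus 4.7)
For \eqref{eq:dmr-full}, the plan is simply to invoke \cref{thm:dmr} with $X \coloneqq X_{h,q}$ and $A \coloneqq A_h$. All the hypotheses are verified in the paragraphs preceding the corollary: $X_{h,q}$ is finite-dimensional (hence reflexive), $A_h$ is bounded (hence densely defined and closed), $0 \in \rho(A_h)$ uniformly in $h$, and $A_h$ has maximal regularity on $X_{h,q}$ uniformly in $h$. The constant in \cref{thm:dmr} depends only on the quasi-uniformity constant of the temporal mesh, the polynomial degree $r$, and the maximal regularity constant of $A_h$, each of which is independent of $h$, so the resulting bound is uniform in both $\tau$ and $h$.

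For \eqref{eq:error-full}, I would mirror the strategy of \cite[Theorem~9]{MR3606467} used in the preceding semi-discrete corollary, but now adapted to incorporate a spatial projection. The decomposition to use is
\begin{equation*}
    u - u_{\tau,h} = \bigl(u - R_h u\bigr) + \bigl(R_h u - I_\tau R_h u\bigr) + \bigl(I_\tau R_h u - u_{\tau,h}\bigr).
\end{equation*}
The first piece is the Ritz error. The second piece is controlled via \eqref{eq:int-err} applied to $R_h u$; inserting $\pm I_\tau u$ and using that $I_\tau$ commutes with $R_h$ up to lower-order terms relates it to $\Lpnorm{u - I_\tau u}{p}{J}{L^q(\Omega_h)}$ and $\Lpnorm{R_h u - u}{p}{J}{L^q(\Omega_h)}$. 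The remaining term $\psi_{\tau,h} \coloneqq I_\tau R_h u - u_{\tau,h}$ lies in the discrete space $X_{\tau,h}^{r,s}$. Using the defining properties of $R_h$ (Galerkin orthogonality for $(\nabla\cdot,\nabla\cdot)$), of $I_\tau$ (nodal value at $t_n^-$ and moment conditions against $\cP^{r-1}$), and of $P_h$ (so that $f_h = P_h f$ and $u_{0,h} = P_h u_0$ interact cleanly with discrete test functions), one shows that $\psi_{\tau,h}$ satisfies the DG scheme \eqref{eq:DG-para} with a forcing expressible purely through $P_h u - u$ and $R_h u - u$. Then \eqref{eq:dmr-full} combined with the uniform $L^q$-invertibility of $A_h$ (namely $\Lpnorm{\psi_{\tau,h}}{p}{J}{L^q(\Omega_h)} \le C \Lpnorm{A_h \psi_{\tau,h}}{p}{J}{L^q(\Omega_h)}$, which follows from $0\in\rho(A_h)$ uniformly in $h$) bounds $\psi_{\tau,h}$ in $L^p(J;L^q(\Omega_h))$ by the three projection errors on the right-hand side of \eqref{eq:error-full}.

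The optimal-order estimate \eqref{eq:error-full-ord} then follows by standard arguments: \eqref{eq:int-err} gives the factor $\tau^{\tilde r+1}\Lpnorm{\partial_t^{\tilde r+1} u}{p}{J}{L^q(\Omega)}$ for the temporal interpolation, while the classical $L^q$-error estimates for $P_h$ and $R_h$ on convex domains with shape-regular quasi-uniform meshes (cf.~\cite{MR2373954}) give the spatial factor $h^{\tilde s+1}\Lpnorm{\nabla^{\tilde s+1} u}{p}{J}{L^q(\Omega)}$. The main technical obstacle I anticipate is the algebraic identification of the residual equation for $\psi_{\tau,h}$: one must verify carefully that the Galerkin orthogonality of $R_h$ absorbs the spatial bilinear form, that the moment/nodal properties of $I_\tau$ collapse the temporal integrals and jump terms cleanly, and that the choice $u_{0,h} = P_h u_0$ (rather than $R_h u_0$) does not produce an incompatible boundary contribution at $t=0$. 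Once this bookkeeping is done, the rest is a direct application of \eqref{eq:dmr-full}.
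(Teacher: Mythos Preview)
Your proposal is correct and matches the paper's approach. The paper's own proof is two sentences: \eqref{eq:dmr-full} is a direct consequence of \cref{thm:dmr} applied with $X=X_{h,q}$ and $A=A_h$, and \eqref{eq:error-full}--\eqref{eq:error-full-ord} are obtained ``by the same argument as in \cite[\S 5.3]{MR3606467}''; your outline is precisely an expansion of that cited argument, with the same Ritz/interpolation/discrete-remainder decomposition and the same use of discrete maximal regularity on the remainder. One minor remark: since $I_\tau$ acts only in time and $R_h$ only in space, they commute exactly, so the ``up to lower-order terms'' caveat is unnecessary.
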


\begin{proof}
    The first assertion \eqref{eq:dmr-full} is a consequence of \cref{thm:dmr}. 
    The estimates \eqref{eq:error-full} and \eqref{eq:error-full-ord} can be obtained by the same argument as in \cite[\S 5.3]{MR3606467}.
\end{proof}

\bibliographystyle{plain}
\bibliography{reference}
\end{document}